\theoremstyle{plain}
\newtheorem{lemma}{Lemma}[section]
\newtheorem{proposition}[lemma]{Proposition}
\newtheorem{theorem}[lemma]{Theorem}
\newtheorem{corollary}[lemma]{Corollary}
\theoremstyle{definition}
\theoremstyle{remark}
\newtheorem{remark}[lemma]{Remark}
\numberwithin{equation}{section}
\def\to{\rightarrow}
\newcommand{\nchi}{{\raise.3ex\hbox{$\chi$}}}
\def\XXint#1#2#3{{\setbox0=\hbox{$#1{#2#3}{\int}$ }
		\vcen{\hbox{$#2#3$ }}\kern-.6\wd0}}
\newcommand{\cB}{\mathcal{B}}
\newcommand{\cD}{\mathcal{D}}
\newcommand{\cI}{\mathcal{I}}
\newcommand{\cQ}{\mathcal{Q}}
\DeclareMathOperator{\argmin}{argmin}
\newcommand{\N}{\mathbb{N}}
\newcommand{\R}{\mathbb{R}}
\renewcommand{\S}{\mathbb{S}}
\newcommand{\dd}{\, \mathrm{d}}
\newcommand{\justified}{%
	\rightskip\z@skip%
	\leftskip\z@skip}
\newcommand\restr[2]{{
		\left.\kern-\nulldelimiterspace 
		#1 
		\vphantom{\big|} 
		\right|_{#2} 
}}
\newcommand{\pt}{\partial_t}
\newcommand{\ptt}{\partial_{tt}^2}
\newcommand{\px}{\partial_x }
\newcommand{\pxx}{\partial_{xx}^2}
\renewcommand{\d}{{\rm d}}
\DeclareFontFamily{U}{mathx}{\hyphenchar\font45}
\DeclareFontShape{U}{mathx}{m}{n}{<-> mathx10}{}
\DeclareSymbolFont{mathx}{U}{mathx}{m}{n}
\DeclareMathAccent{\widebar}{0}{mathx}{"73}
\renewcommand{\i}{\ifmmode\mathit{\mathchar"7010 }\else\char"10 \fi}
\renewcommand{\j}{\ifmmode\mathit{\mathchar"7011 }\else\char"11 \fi}
\renewcommand{\le}{\leq} 
\renewcommand{\ge}{\geq}
\def\char{{1\!\mbox{\rm l}}}
\definecolor{orange}{rgb}{1,.549,0}
\definecolor{GreenYellow }{rgb}{ 0.15,   0.69, 0}
\definecolor{Yellowone}{rgb}{ 0, 1., 0} \definecolor{Goldenrod }{rgb}{  0, 0.10, 0.84}
\definecolor{Dandelion }{rgb}{ 0, 0.29, 0.84} 
\definecolor{Apricot }{rgb}{ 0, 0.32, 0.52}
\definecolor{Peach }{rgb}{ 0, 0.50, 0.70} 
\definecolor{GreenYellow}{cmyk}{0.15,0,0.69,0}
\definecolor{RoyalPurple}{cmyk}{0.75,0.90,0,0}
\definecolor{Yellow}{cmyk}{0,0,1,0}
\definecolor{BlueViolet}{cmyk}{0.86,0.91,0,0.04}
\definecolor{Goldenrod}{cmyk}{0,0.10,0.84,0}
\definecolor{Periwinkle}{cmyk}{0.57,0.55,0,0}
\definecolor{Dandelion}{cmyk}{0,0.29,0.84,0}
\definecolor{CadetBlue}{cmyk}{0.62,0.57,0.23,0}
\definecolor{Apricot}{cmyk}{0,0.32,0.52,0}
\definecolor{CornflowerBlue}{cmyk}{0.65,0.13,0,0}
\definecolor{Peach}{cmyk}{0,0.50,0.70,0}
\definecolor{MidnightBlue}{cmyk}{0.98,0.13,0,0.43}
\definecolor{Melon}{cmyk}{0,0.46,0.5,0}
\definecolor{NavyBlue}{cmyk}{0.94,0.54,0,0}
\definecolor{YellowOrange}{cmyk}{0,0.42,1,0}
\definecolor{RoyalBlue}{cmyk}{1,0.50,0,0}
\definecolor{Orange}{cmyk}{0,0.61,0.87,0}
\definecolor{Blue}{cmyk}{1,1,0,0}
\definecolor{BurntOrange}{cmyk}{0,0.51,1,0}
\definecolor{Cerulean}{cmyk}{0.94,0.11,0,0}
\definecolor{Bittersweet}{cmyk}{0,0.75,1,0.24}
\definecolor{Cyan}{cmyk}{1,0,0,0}
\definecolor{RedOrange}{cmyk}{0,0.77,0.87,0}
\definecolor{ProcessBlue}{cmyk}{0.96,0,0,0}
\definecolor{Mahogany}{cmyk}{0,0.85,0.87,0.35}
\definecolor{SkyBlue}{cmyk}{0.62,0,0.12,0}
\definecolor{Maroon}{cmyk}{0,0.87,0.68,0.32}
\definecolor{Turquoise}{cmyk}{0.85,0,0.20,0}
\definecolor{BrickRed}{cmyk}{0,0.89,0.94,0.28}
\definecolor{TealBlue}{cmyk}{0.86,0,0.34,0.02}
\definecolor{Red}{cmyk}{0,1,1,0}
\definecolor{Aquamarine}{cmyk}{0.82,0,0.30,0}
\definecolor{OrangeRed}{cmyk}{0,1,0.50,0}
\definecolor{BlueGreen}{cmyk}{0.85,0,0.33,0}
\definecolor{RubineRed}{cmyk}{0,1,0.13,0}
\definecolor{Emerald}{cmyk}{1,0,0.50,0}
\definecolor{WildStrawberry}{cmyk}{0,0.96,0.39,0}
\definecolor{JungleGreen}{cmyk}{0.99,0,0.52,0}
\definecolor{Salmon}{cmyk}{0,0.53,0.38,0}
\definecolor{SeaGreen}{cmyk}{0.69,0,0.50,0}
\definecolor{CarnationPink}{cmyk}{0,0.63,0,0}
\definecolor{Green}{cmyk}{1,0,1,0}
\definecolor{Magenta}{cmyk}{0,1,0,0}
\definecolor{ForestGreen}{cmyk}{0.91,0,0.88,0.12}
\definecolor{VioletRed}{cmyk}{0,0.81,0,0}
\definecolor{PineGreen}{cmyk}{0.92,0,0.59,0.25}
\definecolor{Rhodamine}{cmyk}{0,0.82,0,0}
\definecolor{LimeGreen}{cmyk}{0.50,0,1,0}
\definecolor{Mulberry}{cmyk}{0.34,0.90,0,0.02}
\definecolor{YellowGreen}{cmyk}{0.44,0,0.74,0}
\definecolor{RedViolet}{cmyk}{0.07,0.90,0,0.34}
\definecolor{SpringGreen}{cmyk}{0.26,0,0.76,0}
\definecolor{Fuchsia}{cmyk}{0.47,0.91,0,0.08}
\definecolor{OliveGreen}{cmyk}{0.64,0,0.95,0.40}
\definecolor{Lavender}{cmyk}{0,0.48,0,0}
\definecolor{RawSienna}{cmyk}{0,0.72,1,0.45}
\definecolor{Thistle}{cmyk}{0.12,0.59,0,0}
\definecolor{Sepia}{cmyk}{0,0.83,1,0.70}
\definecolor{Orchid}{cmyk}{0.32,0.64,0,0}
\definecolor{Brown}{cmyk}{0,0.81,1,0.60}
\definecolor{DarkOrchid}{cmyk}{0.40,0.80,0.20,0}
\definecolor{Tan}{cmyk}{0.14,0.42,0.56,0}
\definecolor{Purple}{cmyk}{0.45,0.86,0,0}
\definecolor{Gray}{cmyk}{0,0,0,0.50}
\definecolor{Plum}{cmyk}{0.50,1,0,0}
\definecolor{Black}{cmyk}{0,0,0,1}
\definecolor{Violet}{cmyk}{0.79,0.88,0,0}
\definecolor{White}{cmyk}{0,0,0,0}
\definecolor{rltred}{rgb}{0.75,0,0}
\definecolor{rltgreen}{rgb}{0,0.5,0}
\definecolor{oneblue}{rgb}{0,0,0.75}
\definecolor{marron}{rgb}{0.64,0.16,0.16}
\definecolor{forestgreen}{rgb}{0.13,0.54,0.13}
\definecolor{purple}{rgb}{0.62,0.12,0.94}
\definecolor{dockerblue}{rgb}{0.11,0.56,0.98}
\definecolor{freeblue}{rgb}{0.25,0.41,0.88}
\definecolor{myblue}{rgb}{0,0.2,0.4}
\definecolor{Melon}{rgb}{ 0.46, 0.50, 0}
\definecolor{Melone}{rgb}{ 0, 0.46, 0.50}
\title[Decay of 1-d hyperbolic systems]{On the decay of one-dimensional locally and partially dissipated hyperbolic systems}
\author[T. Crin-Barat]{Timothée Crin-Barat}
\address[T. Crin-Barat]{Chair of Computational Mathematics, Fundación Deusto,	Avenida de las Universidades, 24, 48007 Bilbao, Basque Country, Spain.}
\email{timothee.crin-barat@deusto.es}
\author[N. De Nitti]{Nicola De Nitti}
\address[N. De Nitti]{Friedrich-Alexander-Universität Erlangen-Nürnberg, Department of Data Science, Chair for Dynamics, Control and Numerics (Alexander von Humboldt Professorship), Cauerstr. 11, 91058 Erlangen, Germany.}
\email{nicola.de.nitti@fau.de}
\author[E. Zuazua]{Enrique Zuazua}
\address[E. Zuazua]{Friedrich-Alexander-Universit\"at Erlangen-N\"urnberg, Department of Data Science, Chair for Dynamics, Control and Numerics (Alexander von Humboldt Professorship), Cauerstr. 11, 91058 Erlangen, Germany. 
	\newline \indent 
	Chair of Computational Mathematics, Fundación Deusto,	Avenida de las Universidades, 24, 48007 Bilbao, Basque Country, Spain. 
	\newline \indent
	Universidad Autónoma de Madrid, Departamento de Matemáticas, Ciudad Universitaria de Cantoblanco, 28049 Madrid, Spain.}
\email{enrique.zuazua@fau.de}
\keywords{Hyperbolic systems; partially dissipative hyperbolic systems; localized damping; time-decay; Shizuta-Kawashima condition.}
\subjclass[2010]{35B40, 35L65, 35L02.}
\begin{document}
\maketitle

\begin{abstract}
We study the time-asymptotic behavior of linear hyperbolic systems under partial dissipation which is localized in suitable subsets of the domain. More precisely, we recover the classical decay rates of partially dissipative systems satisfying the stability condition (SK) with a time-delay depending only on the velocity of each component and the size of the undamped region. To quantify this delay, we assume that the undamped region is a bounded space-interval and that the system without space-restriction on the dissipation satisfies the stability condition (SK). The former assumption ensures that the time spent by the characteristics of the system in the undamped region is finite and the latter that whenever the damping is active the solutions decay. Our approach consists in reformulating the system into $n$ coupled transport equations and showing that the time-decay estimates are delayed by the sum of the times each characteristics spend in the undamped region.
\end{abstract}

\section{Introduction}
\label{sec:intro}

We consider linear hyperbolic systems of the form
\begin{align}\label{eq:LS}
\begin{cases}
\pt U + A \px U = B(x) U, & (x,t) \in \R \times (0,\infty), \\
U(0,x) = U_0(x), & x \in \R,
\end{cases}
\end{align}
where $n\in \N^*$, $U: (0,\infty) \times \R \to \R^n$ is the unknown function such that $U = (u_1,u_2) \in \R^{n_1}\times \R^{n_2}$ and $A,B(x)$ are $n\times n$ real matrices.

We assume that the $x$-dependent matrix $B$ satisfies
\begin{align}\label{eq:damping}
B(x) & > 0, \quad x \in \omega, \qquad  \text{ and } \qquad  B(x) = 0, \quad x \not\in \omega, \\  
B(x) &:=  \begin{pmatrix} 0_{n_1\times n_2} & 0_{n_1\times n_2} \\ 0_{n_2 \times n_1} &  \mathbbm{1}_\omega(x) D \end{pmatrix},
\end{align}
where 
\begin{align}\label{eq:omega}
\omega := \R \setminus B_R(0) = \{x \in \R: \Vert x \Vert \ge R\} \quad \text{ for a fixed $R >0$, }
\end{align}
and
\begin{align}\label{ass:D}
D \in \R^{n_2 \times n_2}, \quad X^T D X > 0, \quad \forall X \in \R^{n_2} \setminus \{0\}.
\end{align}
In other words, the damping term \eqref{eq:damping} acts only on $n_2$ components of the system and is effective only in the region $\omega$. As in \cite{MR1146833}, the choice of $\omega$ as an exterior domain is motivated by a \emph{geometric control condition} (see \cite{MR1178650}): the ray of geometric optics may escape the damping effect and not satisfy any decay properties if the inclusion $\{\Vert x \Vert \ge R\} \subset \omega$ is not satisfied for some $R>0$.

We assume that $A$ is a \emph{strictly hyperbolic matrix} and that $A$ has $n$ real distinct eigenvalues such that
\begin{align}\label{ass:eigR}
\lambda_1<\dots <\lambda_p < 0 < \lambda_{p+1}<\dots < \lambda_n.
\end{align}
In particular this implies that $A$ has no zero eigenvalues
\begin{align}\label{ass:eig0}
	\lambda_i \neq 0, \quad i = 1, \dots, n.
\end{align}
Without this assumption, a ray may never reach the damped region and around it one can always concentrate an undamped solution, a situation we clearly need to avoid to ensure the solutions decay to $0$ when $t\to\infty.$

Analogous assumptions are commonly used in works on the boundary controllability of (systems of) conservation laws (see, e.g., \cite{MR2655971}): when there are zero eigenvalues, i.e. 
\begin{align*}
&\lambda_p < \lambda_q \equiv 0 < \lambda_r, \\
&\nonumber p = 1, \dots, l, \quad q = l+1, \dots, m, \quad r = m+1, \dots, n,
\end{align*}
there are standing waves solutions that may not enter the region of the space where the damping is effective; therefore, to realize exact controllability, suitable boundary control corresponds to non-zero eigenvalues and suitable internal controls correspond to zero eigenvalues.
\medbreak
In the case $\omega=\mathbb{R}$, the existence and the behavior of the solutions of \eqref{eq:LS} is well known.
From the classical theory (see e.g. \cite{Russel}), we know that \eqref{eq:LS} generates a semigroup $S_d(t)$ of bounded operators on $L^2(\mathbb{R};\mathbb{R})$. Therefore, given an initial data $U_0\in L^2$, the system \eqref{eq:LS} has a unique solution $U\in\mathcal{C}(\mathbb{R}_+;L^2(\mathbb(\R))$ such that
$$U(\cdot,t)=S_d(t)U_0. $$

Indeed, applying the Fourier transform (in the space variable) to \eqref{eq:LS} yields
\begin{align*}
\pt \hat U(t,\xi) + i \sum_{j=1}^d A_j \xi_j \hat U(t,\xi) = -B \hat U(t,\xi),
\end{align*}
or, in a condensed form,
\begin{align*}
\pt \hat U(t,\xi) = E(\xi) \hat U(t,\xi),
\end{align*}
where $E(\xi):= -B - iA(\xi)$ and $A(\xi) := \displaystyle\sum_{j=1}^d \xi_j A_j$. 
Solving this first order ODE, we obtain 
\begin{align*}
\hat U(\xi,t) = \exp(E(\xi)t)\hat w_0(\xi).
\end{align*}
Then the $C_0$-semigroup $S_d$ acting on $L^2(\mathbb{R})^n$ can be defined as
$$S_d(t)U=e^{-tE}U=\mathcal{F}^{-1}\left(e^{-tE(\xi)}\widehat U\right)$$
where $-E=-A\partial_xU+BU$ is the associated generator. Its domain contains the Sobolev space $H^1(\mathbb{R})^n$ and with Fourier-Plancherel theorem, the estimate of the semigroup $e^{-tE}$ in $L^2$ is reduced to the analysis of $e^{-tE(\xi)}$ for $\xi\in \R^*$.
\medbreak
Concerning the asymptotic behavior of solutions, in general the semigroup $S_d$ is not dissipative and we have $|||S|||=c$ for some constant $c>0$. 
Indeed, owing to the symmetry of the matrix $A$,  the classical energy method leads to 
  \begin{equation}\label{eq:ZZ}
   \frac 12\frac {\d}{\d t}\|U\|_{L^2}^2 + (BU|U)_{L^2} =0.\end{equation}
 Conditions \eqref{ass:D}   implies that
 there exists $\kappa_0>0$ such that
 \begin{equation}\label{eq:LZ}    (BU|U)_{L^2} \geq \kappa_0\|U_2\|_{L^2}^2.
 \end{equation}
 Hence, \eqref{eq:ZZ} yields $L^2$-in-time integrability on the component $w_2$ but not for $w_1$.
 In general, this lack of coercivity greatly reduces our chances of proving decay estimates and one is actually able to construct solution that do not decay at infinity.

To overcome this issue, Shizuta and Kawashima in \cite{MR798756} developed a condition, the well-known stability condition (SK), which ensures the decay of our solution to $0$ when $t\to\infty$:
\begin{align}\label{eq:SK-eig}\tag{SK}
\{\text{eigenvectors of } A(\xi)\} \cap \mathrm{Ker}(B) = \{0\}, \quad \forall\xi \in \R^*. 
\end{align}
In one space dimension, this condition is equivalent to the fact that there are no plane wave solutions to the hyperbolic system propagating to the characteristic directions and therefore ensures the decay of our solutions; in higher dimensions, it is known that the (SK) condition is sufficient to ensure such properties but not necessary. We refer to \cite{MR2754341} for further details

Under the (SK) assumption, in the case $\omega = \R$, one can prove that $\lim_{t\rightarrow\infty}e^{-tE(\xi)}=0$ and more precisely
$$\|e^{-tE(\xi)}\|_{\mathbb{C}^m\rightarrow \mathbb{C}^m}\leq Ce^{-c\frac{|\xi|^2}{1+|\xi|^2}t}. $$
This implies that the behavior of the solution will strongly depend on the frequency regime we are looking at. To this matter, inspired by \cite{MR2349349}, we define the high and low frequencies semigroups $S_d^h$ and $S_d^\ell$ such that
$$U(t)=S_d(t,0)U_0:=S_d^h(t,0)U_0+S_d^\ell(t,0)U_0$$ where the exact formulation of $S_d^h$ and $S_d^\ell$ can be found in \cite[Eq. (3.57) \& (3.82)]{MR2349349}. One interpretation of this formula is that the diffusive part $S_d^\ell$, consists of heat kernels moving along the characteristic directions of the local
relaxed hyperbolic problem and the singular part $S_d^h$ consists of exponentially decaying functions along the characteristic directions of the full system (cf. \cite[p. 1561]{MR2349349}).

We recall the following classical result (see, e.g., \cite[Theorem 1.2]{MR798756}).
\begin{theorem}[SK decay estimate] \label{ThmSK}
Under the condition (SK) and in the case $\omega=\R^d$, the solution of \eqref{eq:LS} verifies
\begin{align}
& \label{eq:decay_SK_h}\Vert U^h(\cdot,t) \Vert_{L^2(\R^d)} \le \|S_d^h(t,0) U_0^h \Vert_{L^2(\R^d)} \le C e^{-\gamma t} \Vert U_0^h \Vert_{L^2(\R^d)}, \\
& \label{eq:decay_SK_l} \Vert U^\ell(\cdot,t) \Vert_{L^\infty(\R^d)} \le \|S_d^\ell(t,0) U_0^\ell \Vert_{L^\infty(\R^d)}\le C t^{-d/2} \Vert U_0^\ell \Vert_{L^1(\R^d)},
\end{align}
where $C,\gamma$ are positive constants depending only on $A$ and $B$, $\widehat{U}^h(\xi,t)=\widehat{U}(\xi,t)\mathds{1}_{\{|\xi|>1\}}$ and $\widehat{U}^\ell(\xi,t)=\widehat{U}(\xi,t)\mathds{1}_{\{|\xi|<1\}}$.
\end{theorem}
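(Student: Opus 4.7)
The plan is to work entirely in Fourier variables, reducing the two estimates to a single pointwise bound on the matrix exponential that arises in $\hat U(\xi,t) = \exp(t(-iA(\xi)-B))\hat U_0(\xi)$. The target symbol-level estimate is
\begin{equation*}
\bigl\|\exp\!\bigl(t(-iA(\xi)-B)\bigr)\bigr\|_{\mathbb{C}^n\to\mathbb{C}^n}\leq C\exp\!\left(-\kappa\,\tfrac{|\xi|^2}{1+|\xi|^2}\,t\right),\qquad \xi\in\mathbb{R}^d,\ t\geq 0,
\end{equation*}
for constants $C,\kappa>0$ depending only on $A$ and $B$. Once this bound is in hand, the estimates \eqref{eq:decay_SK_h} and \eqref{eq:decay_SK_l} drop out by splitting $\xi$ into the regions $\{|\xi|\geq 1\}$ and $\{|\xi|\leq 1\}$, on which the factor $|\xi|^2/(1+|\xi|^2)$ behaves like a positive constant and like $|\xi|^2$, respectively.

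The hard part is proving the symbol bound, because the naive energy identity $\frac{d}{dt}|\hat U(\xi,t)|^2 = -2\langle B\hat U,\hat U\rangle$ controls only the last $n_2$ components: $B$ vanishes on $\mathbb{R}^{n_1}\times\{0\}$. To transfer coercivity to the whole of $\mathbb{C}^n$, I would invoke the Shizuta--Kawashima compensating-matrix construction: condition \eqref{eq:SK-eig} is equivalent to the existence of a smooth, $0$-homogeneous, skew-Hermitian matrix field $\xi\mapsto K(\xi)$ on $\mathbb{R}^d\setminus\{0\}$ such that
\begin{equation*}
[K(\xi)A(\xi)]^{\mathrm{sym}}+B\geq c_0\,I_n,\qquad \xi\in\mathbb{R}^d\setminus\{0\},
\end{equation*}
for some $c_0>0$. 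I would cite the construction of $K$ directly from \cite{MR798756} rather than rederive it, since this algebraic equivalence is precisely the content of (SK) and is entirely classical.

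Granted such a compensating matrix, I would introduce the Lyapunov functional
\begin{equation*}
\mathcal{L}(\hat U)(\xi,t):=|\hat U(\xi,t)|^2+\eta\,\tfrac{|\xi|}{1+|\xi|^2}\,\mathrm{Im}\,\bigl\langle K(\xi)\hat U(\xi,t),\hat U(\xi,t)\bigr\rangle,
\end{equation*}
with $\eta>0$ chosen small enough that $\mathcal{L}$ is uniformly equivalent to $|\hat U|^2$ in $\xi$. A direct computation based on $\partial_t\hat U = (-iA(\xi)-B)\hat U$ and the coercivity estimate above yields a differential inequality of the form
\begin{equation*}
\frac{d}{dt}\mathcal{L}(\hat U)(\xi,t)+\kappa\,\tfrac{|\xi|^2}{1+|\xi|^2}\,\mathcal{L}(\hat U)(\xi,t)\leq 0,
\end{equation*}
and Gr\"onwall delivers the desired symbol bound. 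The high-frequency estimate \eqref{eq:decay_SK_h} then follows from Plancherel applied on $\{|\xi|\geq 1\}$, where $|\xi|^2/(1+|\xi|^2)\geq 1/2$; the low-frequency estimate \eqref{eq:decay_SK_l} follows from Fourier inversion on $\{|\xi|\leq 1\}$ combined with the Hausdorff--Young bound $\|\hat U_0^{\ell}\|_{L^\infty_\xi}\leq\|U_0^{\ell}\|_{L^1_x}$ and the elementary Gaussian integral $\int_{\mathbb{R}^d}e^{-\kappa|\xi|^2 t/2}\,d\xi\leq Ct^{-d/2}$.
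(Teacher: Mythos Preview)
The paper does not actually prove this theorem: it is stated as a recalled classical result, with the citation ``see, e.g., \cite[Theorem 1.2]{MR798756}'' immediately preceding the statement, and no argument is given afterward beyond a brief discussion of related literature. Your sketch is correct and is precisely the standard compensating-matrix/Lyapunov-functional proof due to Shizuta and Kawashima; the paper's post-theorem remarks about the Kalman rank equivalence \cite{MR2754341} and hypocoercivity-style functionals \cite{MR2562709} are pointing at exactly the mechanism you describe.

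One small technical wrinkle worth flagging, though it concerns the statement rather than your argument: the right-hand side of \eqref{eq:decay_SK_l} is written as $\|U_0^\ell\|_{L^1}$, but your Hausdorff--Young step actually yields $\|\hat U_0\|_{L^\infty_\xi}\leq \|U_0\|_{L^1}$, i.e.\ a bound in terms of the full data $U_0$, not its low-frequency projection. This is the correct and usual formulation, since sharp frequency truncation does not preserve $L^1$; the discrepancy is a minor imprecision in the recalled statement, not a flaw in your approach.
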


Notice that the high frequencies of the solution are exponential damped while the low frequencies behaves as the solutions of the heat equation. Moreover, their computations also allows to deduce the existence of global smooth solutions for nonlinear systems associated to initial data close to a constant equilibrium in any dimension (see \cite{MR2754341,MR2005637,MR2349349,MR2058165,CBD1,CBD2}).

In \cite{MR2754341}, the authors noticed that the condition (SK) is equivalent to the classical \emph{Kalman rank condition} (see \cite{MR889459}) in control theory for all the pairs $(A(\xi), B)$  with $\xi \neq 0$; then they obtained a simpler proof of the decay estimates  \eqref{eq:decay_SK_h}-\eqref{eq:decay_SK_l} by constructing an energy functional with additional low-order terms (motivated by the hypo-coercivity theory of Villani \cite{MR2562709} and by works on the damped wave equation).
More recently, inspired by the construction of such functionals, in \cite{CBD3}, Crin-Barat and Danchin developed a method that allows to study  general quasi-linear partially dissipative hyperbolic systems in a critical regularity framework; in addition, they justified the relaxation process associated to such system by highlighting a purely damped mode in the low-frequency regime which allows to diagonalize the system in this regime.

In the present paper, we investigate system \eqref{eq:LS} in case of a damping acting in a region $\omega$ as in \eqref{eq:omega} instead of the whole space. In this case, the tools developed in the above references are not of much help as they mainly rely on the Fourier transform, which would yield a convolution between $\widehat{\mathbbm{1}_\omega}$ and $\widehat U$ that mixes the frequencies too much to obtain useful information about the dissipative mechanism. 

In the model case of the wave equation, the decay of solutions when the damping term acts only in a region of the domain satisfying a suitable geometric condition has been an active area of investigation in the past decades. In \cite{MR1146833}, Zuazua proved the energy decay for the Klein-Gordon equation with locally distributed dissipation. In \cite{MR1032629}, on the other hand, the decay estimate is obtained for the damped wave equation in a bounded domain. Subsequently, various local energy decay results have been obtained for the linear wave equation in an (unbounded) exterior domain $\Omega \subset \R^d$, which has a localized dissipation being effective only near a part of the boundary (see \cite{MR1855430}). More generally, in \cite{MR361461}, the case of systems with total dissipation (and in a compact domain) is dealt with.  Finally, in \cite{MR3626005,MR3560359}, Léautaud and Lerner studied the decay rate for the energy of solutions of a damped wave equation in a situation where the geometric control condition is violated. 

In a similar spirit, in \cite{CoronNguyen}, the authors study the controllability of general linear hyperbolic system in one space dimension using boundary
controls on one side. Interestingly, a limitation similar to the one we are facing here appears in their computations. Indeed, when dealing with system with three or more components, it appears that their system is not, in general, null-controllable in \textit{optimal time}. And, as we shall see, an analogous phenomenon also occurs in our situation.
In \cite{MR3606411}, the authors deal with 
the controllability from the interior of an hyperbolic system with a reduced number of controls (which parallels the stabilization from $\omega$ using a partial damping). For coupled waves, some similar results are contained in \cite{MR3846289}; we refer also to \cite{MR4086584,MR3550851} for results concerning parabolic or parabolic-hyperbolic systems. 

However, it is not clear how to adapt the above methods to our situation. Here, we take a step-back from these advanced theories and develop a simple and direct method involving only the consideration of the characteristics curves and a semigroup-wise decomposition.

\subsection*{Outline}
\label{ssec:outline} 

In Section \ref{sec:main}, we state our main results. Section \ref{sec:prelim} is dedicated to establishing notations and some preliminary results on hyperbolic systems and characteristics. In Section \ref{sec:strategy-toys}, we outline the strategy and main difficulty of the proof and present the analysis of some toy problems (including the scalar case $n=1$). 
Section \ref{sec:proof-linear} is devoted to the proof of our main result. In Section \ref{sec:3comp}, we give an optimal analysis in the case of a system with $3$ negative eigenvalues.
Finally, in Section \ref{sec:open}, we discuss potential extensions of the main results and several open problems.

\section{Main results}
\label{sec:main}

The main result we establish is the following.
\begin{theorem}[Decay estimates for locally-undamped partially dissipative systems]\label{MainThm}
Assume that the matrix $A$ is symmetric, satisfies \eqref{ass:eigR} and that the couple $(A,B)$ satisfies the (SK) condition.
Let $U$ be the solution of \eqref{eq:LS} associated with the initial data $U_0\in L^1(\R)\cap L^2(\mathbb{R})$.

There exists a constant $C>0$ and a finite time $\bar{\tau}>0$ such that for $t\geq\bar{\tau}$ 
\begin{align*}
& \Vert U^h(\cdot,t) \Vert_{L^2(\R)} \le  Ce^{-\gamma (t-\bar{\tau})} \Vert U_0 \Vert_{L^2(\R)}, \\
&  \Vert U^\ell(\cdot, t) \Vert_{L^\infty(\R)} \le C (t-\bar{\tau})^{-1/2} \Vert U_0 \Vert_{L^1(\R)}
\end{align*}
and $$\displaystyle\bar{\tau}=\max\left(\sum_{i=1}^p\dfrac{2R}{|\lambda_i|},\sum_{i=p+1}^n\dfrac{2R}{|\lambda_i|}\right).$$
\end{theorem}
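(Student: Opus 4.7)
The plan is to reformulate \eqref{eq:LS} into $n$ coupled transport equations by diagonalizing $A$. Since $A$ is symmetric with $n$ distinct non-zero eigenvalues, there exists an orthogonal matrix $P$ with $P^T A P = \Lambda := \mathrm{diag}(\lambda_1, \dots, \lambda_n)$. Setting $V := P^T U$, the system becomes
\begin{equation*}
\partial_t V + \Lambda \partial_x V = \mathbbm{1}_\omega(x)\, \widetilde{B}\, V, \qquad \widetilde{B} := P^T B P,
\end{equation*}
i.e., $n$ transport equations coupled only inside the damped region $\omega = \{|x|\ge R\}$. Each characteristic $X_i(t;x_0) = x_0 + \lambda_i t$ spends time at most $2R/|\lambda_i|$ inside $[-R,R]$, as a direct consequence of $\lambda_i \neq 0$ and $R<\infty$.

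I would then work with the Duhamel-type identity along characteristics,
\begin{equation*}
v_i(X_i(t;x_0), t) = v_{i,0}(x_0) + \int_0^t \mathbbm{1}_\omega(X_i(\sigma; x_0))\, \bigl(\widetilde{B}\, V\bigr)_i(X_i(\sigma; x_0), \sigma) \, \mathrm{d}\sigma.
\end{equation*}
When the characteristic lies in the damped zone, the classical (SK) mechanism of Theorem~\ref{ThmSK} produces decay; when it lies in $[-R,R]$, the corresponding component evolves by pure transport. The core task is to control how the (SK) cascade, which in free space propagates dissipation from the directly-damped block to the undamped one via iterated commutators of $A$ and $B$, is degraded by the gap $[-R,R]$.

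The expected mechanism is a chain: an excitation on the $i$-th left-going characteristic enters $[-R,R]$, re-emerges after time at most $2R/|\lambda_i|$, and couples through $\widetilde{B}$ to another left-going characteristic, which must in turn traverse $[-R,R]$, and so on. After cascading through all $p$ left-going characteristics, the total lost-time accumulates to $\sum_{i=1}^p 2R/|\lambda_i|$; symmetrically for the right-going block, giving the delay $\bar{\tau}$ as the worse of the two directions. Once $t \ge \bar{\tau}$, the (SK) dissipation has enjoyed enough ``clean'' damped time along every characteristic to deliver the decay rates of Theorem~\ref{ThmSK}.

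Concretely, I would first validate the method on the scalar case $n=1$ (Section~\ref{sec:strategy-toys}), where the explicit characteristic formula
\begin{equation*}
|u(X(t;x_0), t)| \le |u_0(x_0)|\, \exp\bigl(-d\,(t - 2R/|\lambda|)_+\bigr)
\end{equation*}
directly yields the delayed exponential decay with $\bar{\tau} = 2R/|\lambda|$, and then work through the toy multi-component models of Sections~\ref{sec:strategy-toys} and~\ref{sec:3comp} to pin down the commutator cascade. The main obstacle will be to rigorously close the cascade in the general $n$-component case so that the delay is the \emph{sum} rather than merely $\max_i 2R/|\lambda_i|$: a naive Duhamel expansion only yields the latter, so the argument must carefully account for the fact that each additional step in the (SK) chain requires a fresh transit through the damped region to transfer dissipation from one component to the next. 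For the high-frequency $L^2$ bound I expect this to close via a Gr\"onwall estimate after time $\bar{\tau}$; for the low-frequency $L^\infty$ bound, I would combine the heat-kernel-like dispersion of $S_d^\ell$ (valid in the fully-damped case) with the transport evolution inside $[-R,R]$ to obtain the $(t-\bar{\tau})^{-1/2}$ rate.
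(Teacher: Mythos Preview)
Your setup---diagonalizing $A$ and writing the system as $n$ coupled transports with coupling supported in $\omega$---is exactly the paper's starting point. But the mechanism you describe for why the delay is the \emph{sum} $\sum_i 2R/|\lambda_i|$ is not the one the paper uses, and your proposal overlooks the main technical obstruction that the paper has to work around.

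\medskip
\textbf{On the mechanism.} You frame the delay as a commutator cascade: one characteristic transits $[-R,R]$, couples to the next, which transits in turn, and so on through all $p$ left-going modes. The paper's argument is different and more direct. It writes the solution at $(x,t)$ component-wise as a concatenation of semigroups,
\[
v_i(x,t)=S_{d,i}(t,t_{i,ex})\,S_{c,i}(t_{i,ex},t_{i,en})\,S_{d,i}(t_{i,en},0)\,v_{i,0}(x),
\]
and observes that the SK estimate of Theorem~\ref{ThmSK} is a statement about the \emph{full} semigroup $S_d=(S_{d,1},\dots,S_{d,n})$: one can only invoke it on a time interval where \emph{all} $S_{d,i}$ are simultaneously active. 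Hence at the fixed point $(x,t)$ the lost time is the measure of the union
\[
\mathcal{I}(x,t)=\bigcup_{i}[t_{i,en}(x,t),\,t_{i,ex}(x,t)],
\]
which is trivially bounded by $\sum_i 2R/|\lambda_i|$. The sum arises because the intervals can be disjoint (worst case), not because of an iterated transfer of dissipation. Your cascade heuristic may arrive at the same number, but it is not clear how it would be made rigorous, and the Gr\"onwall closure you anticipate is not what the paper does.

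\medskip
\textbf{The missing technical step.} The genuine difficulty---which your proposal does not mention---is that $t_{i,en}(x,t)$ and $t_{i,ex}(x,t)$ depend on $x$. You therefore cannot take the $L^2_x$ norm of the semigroup product directly and pull the SK bound out. The paper handles this by partitioning the relevant spatial interval into $\bigcup_j[a_j,a_{j+1}]$ with mesh $1/N$, freezing the transition times at the endpoints $a_j,a_{j+1}$ on each cell (via separate bounds on the positive and negative parts of the integrand), applying the semigroup estimates with these constant times, and then sending $N\to\infty$ so that the remainder terms vanish. This spatial discretization is the core of the proof in the scalar case and carries over verbatim to $n$ components; without it, the argument does not close.
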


Next, we establish a theorem describing the behavior of the solution before the time $\bar{\tau}.$
\begin{theorem}[Behavior before $\bar \tau$] \label{MainThm2}
Let the assumptions from Theorem \ref{MainThm} be in force. There exists a time $\tau^*\in[0,\bar{\tau}]$ such that the following holds:
\begin{itemize}
    \item for $t\in[0,\tau^*]$ and $p\in[2,\infty]$, the solution does not decay in time globally in space and we only have
    \begin{align} \label{ThmConserv1}
&\Vert U(t) \Vert_{L^p(\R)} \leq \Vert U_0 \Vert_{L^p(\R)};
\end{align}
\item for $t\in[\tau^*,\bar{\tau}]$, the solution still verifies \eqref{ThmConserv1} but can undergo global-in-space time-decay.
\end{itemize}

Additionally, \begin{align}\label{TauStarTauBar}\tau^*=\bar{\tau}\Leftrightarrow \dfrac{|\lambda_i|}{|\lambda_{i+1}|}=\dfrac{|\lambda_{i+1}|}{|\lambda_{i+2}|} \quad\forall  i\in[1,p-2] \quad \text{or} \quad\forall  i\in[p+1,n-2]),
\end{align}
the choice depending on whether the maximum in the definition of $\bar{\tau}$ is attained for the components associated to the positive or to the negative eigenvalues.
\end{theorem}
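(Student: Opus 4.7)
The argument splits into three parts: (i) the universal $L^p$-contraction, (ii) the construction of non-decaying solutions up to $\tau^*$, and (iii) the characterization of the case $\tau^*=\bar{\tau}$ via the eigenvalue ratios.

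\emph{(i)} For the contraction $\|U(t)\|_{L^p(\R)} \le \|U_0\|_{L^p(\R)}$, $p \in [2,\infty]$: when $p=2$, the symmetry of $A$ makes $A\partial_x$ formally skew-adjoint, so \eqref{eq:ZZ} and $B(x)\ge 0$ give $\tfrac{\mathrm{d}}{\mathrm{d}t}\|U\|_{L^2}^2\le 0$. For $p=\infty$, I would diagonalize $A=P^{\top}\Lambda P$ with $P$ orthogonal; then $V:=PU$ solves
\begin{equation*}
\partial_t V + \Lambda\partial_x V = -\mathbbm{1}_\omega(x)\,\tilde D\,V, \qquad \tilde D := P\,\mathrm{diag}(0,D)\,P^{\top}\ge 0,
\end{equation*}
and integrating the $i$-th line along the characteristic $x\mapsto x+\lambda_i t$ in Duhamel form, together with $\tilde D\ge 0$ and Grönwall, yields $\|V(t)\|_{L^\infty}\le \|V_0\|_{L^\infty}$; the bound on $U$ follows from the orthogonality of $P$, and Riesz--Thorin covers $p\in(2,\infty)$.

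\emph{(ii)} To show that decay is impossible for $t\le \tau^*$, I would construct, for every $t_0<\tau^*$, initial data whose solution satisfies $\|U(t_0)\|_{L^p}=\|U_0\|_{L^p}$. The simplest instance places a bump on a single Riemann invariant associated to some $\lambda_i$ at one edge of $B_R(0)$ in such a way that its image under $x\mapsto x+\lambda_i t$ stays inside the undamped interval $(-R,R)$ throughout $[0,t_0]$: no coupling with other modes is excited, no damping acts, and the norm is preserved. This already yields $\tau^*\ge \max_i 2R/|\lambda_i|$. More intricate cascade constructions exploit the coupling to chain several characteristic crossings and enlarge $\tau^*$ further, up to $\bar{\tau}$ at best. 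The second bullet then follows by combining Theorem \ref{MainThm} (applied at time $\bar{\tau}$) with the impossibility of extending any non-decay cascade beyond $\tau^*$.

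\emph{(iii)} For the characterization \eqref{TauStarTauBar}: the quantity $\bar{\tau}=\sum_i 2R/|\lambda_i|$ records a worst-case sequential cascade of characteristic crossings of $B_R(0)$, and the question is whether some of its legs can be run in parallel. A careful bookkeeping of which characteristics are simultaneously inside $B_R(0)$ and which coupling terms they activate in $\omega$ shows that no parallelization is possible exactly when the magnitudes $|\lambda_i|$ (restricted to the side on which the maximum in the definition of $\bar{\tau}$ is attained) form a geometric progression, which is precisely \eqref{TauStarTauBar}. I expect this combinatorial matching to be the main obstacle; the three-eigenvalue case treated in Section \ref{sec:3comp} should supply the atomic step of an induction on the number of eigenvalues of a given sign.
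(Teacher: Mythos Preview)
Your part (i) contains a gap: positive semidefiniteness of $\tilde D$ gives $V^{\top}\tilde D V\ge 0$ but says nothing about the sign of the individual entries $(\tilde D V)_i$, so integrating the $i$-th equation along its characteristic and applying Gr\"onwall yields at best an exponentially growing bound $e^{Ct}\|V_0\|_{L^\infty}$, not a contraction. (The paper is also not fully explicit on the $L^\infty$ case and effectively works in $L^2$.)

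Your part (ii) misses the paper's mechanism. The paper does not construct special data, and in particular does not use coupling to ``chain'' crossings. Instead it works with the set $\mathcal{I}(x,t)=\bigcup_i [t_{i,\mathrm{en}}(x,t),t_{i,\mathrm{ex}}(x,t)]$ of times at which \emph{some} backward characteristic from $(x,t)$ lies in $\omega^c$: by the semigroup decomposition and \eqref{FinalVn}, no decay can be extracted as long as there is a point with $\mathcal{I}(x,t)=[0,t]$. The paper then exhibits such points explicitly: on the line $\mathcal{D}=X_p(\cdot,R,2R/|\lambda_p|)$ there is a unique point $(x_{p-1},t_{p-1})$ at which $t_{p,\mathrm{ex}}=t_{p-1,\mathrm{en}}$, so the $X_p$- and $X_{p-1}$-crossings of $\omega^c$ tile $[0,\tau_p+\tau_{p-1}]$ exactly, giving $\tau^*\ge \tau_p+\tau_{p-1}$ without any appeal to the dynamics. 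The chaining is purely geometric --- different characteristic families through the \emph{same} observation point visit $\omega^c$ at complementary times --- and your coupling-based cascade is neither needed nor obviously workable (once a bump exits $\omega^c$ into $\omega$ the damping acts and the $L^p$ norm is no longer exactly preserved, so you cannot chain exact-conservation legs this way).

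For part (iii) your intuition is correct but the argument is much simpler than an induction. Because $(x_{p-1},t_{p-1})$ is the only point on $\mathcal{D}$ where the two slowest crossings tile perfectly, $\tau^*=\bar\tau$ forces \emph{all} the remaining crossings to tile there as well. The paper simply writes out $t_{i,\mathrm{ex}}(x_{p-1},t_{p-1})=t_{i-1,\mathrm{en}}(x_{p-1},t_{p-1})$ for each $i$, which reduces in one line to $|\lambda_{i-1}|/|\lambda_i|=(x_{p-1}+R)/(x_{p-1}-R)=|\lambda_{p-1}|/|\lambda_p|$, i.e.\ the constant-ratio condition. Section~\ref{sec:3comp} illustrates the overlap/gap dichotomy when this fails, but is not used as an induction step.
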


\medbreak

\begin{remark} 
A few remarks are in order.
\begin{itemize}
 \item On the time-interval $[\bar{\tau},+\infty]$ the result from Theorem \ref{MainThm} is natural and optimal in the sense that we recover the same decay rates as in the case $\omega=\R$ delayed by the time each characteristics spent in the undamped region $\omega^c$.
 
\item In the exponential decay inequality, it seems that there are too many constants in the sense that if we set $C'=Ce^{\gamma\bar{\tau}}$ then one could have a more compact form. Here, we prefer the two-constant form because we are able to provide an explicit formula for the delay perceived by the time-decay rates of the energy, and $C$ refers to the same constant as in Theorem \ref{ThmSK}.

\item Both our theorems hold without any restriction on the support of the initial data. However, they could be refined when considering a compactly supported initial data. For instance, if one consider initial data supported far from $\omega^c$, then the solution would be decaying during the whole time it takes it to reach $\omega^c$. 

\item The information in Theorem \ref{MainThm2} tells us how the solution behaves for small times. Such a result is useful in the quasilinear context where one needs to know that the undamped region is not too large to prevent the solution from blowing up.
\item The time $\tau^*$ corresponds to the longest time during which some part of the energy of the solution is conserved.

\item The lack of precision on the interval $[\tau^*,\bar{\tau}]$  is due to the fact that 
we are not able to give an explicit formula for $\tau^*$ without involving a very large number of conditions on the eigenvalues of the matrix $A$.

\item In Section \ref{sec:3comp}, we compute the value of $\tau^*$ in the case of a system with three negative eigenvalues ($n=p=3)$ and give a perfect description of the behavior of the solution on the interval $[\tau^*,\bar{\tau}]$ in Theorem \ref{Thm3comp}.

\item The proportionality condition on the eigenvalues of the system is due to fact that the maximum conservation time can only be attained if, for some point, the characteristics continuously enter and exit the undamped region $\omega^c$ without \textit{overlappings} or \textit{gaps}.

\item In Section \ref{sec:open}, we provide a result in the case when $\omega^c$ is a finite union of bounded stripes and discuss other potential extensions.
\end{itemize}
\end{remark}

\section{Preliminaries and notations}
\label{sec:prelim}

Before discussing the strategy of proof, introducing some concepts and notations is necessary. As the matrix $A$ is symmetric with $n$ real distinct eigenvalues, there exists a matrix $P \in O(n,\mathbb{R})$ such that $$P^{-1}AP=D \quad \text{and} \quad D=\operatorname{diag}(\lambda_1,...,\lambda_n).$$
Setting $V=P^{-1}U$, the system \eqref{eq:LS} can be reformulated into 
\begin{align}\label{eq:V}
\begin{cases}
\pt V + D \px V = P^{-1}B(x)PV, & (x,t) \in \R \times (0,\infty), \\
V(0,x) = V_0(x), & x \in \R,
\end{cases}
\end{align}

\begin{remark}
Note that if the matrices $A$ and $B$ commute, as the (SK) condition is satisfied we could diagonalize the matrices simultaneously and end up with decoupled equations as in the totally dissipative case.
\end{remark}
We now notice that a time-decay result for the unknown $V$ immediately implies the same result for the unknown $U$ up to some constant. And it is therefore sufficient to study the unknown $V$ to prove our result.

Decomposing $V=(v_1,\ldots,v_n)$, the system \eqref{eq:V} is equivalent to the following system of coupled transport equations:
\begin{align}\label{eq:vn} 
\begin{cases}
\pt v_1 +\lambda_1 \partial_xv_1 &=\sum_{j=1}^nb_{1,j}v_j\,a(x) \\ &\:\vdots\\
\pt v_n +\lambda_n \partial_xv_n &=\sum_{j=1}^nb_{n,j}v_j\,a(x)
\end{cases}
\end{align}
where the $b_{i,j}$ corresponds to the coefficients of the matrix $P^{-1}BP$.

For instance, the damped wave equation $\ptt u - \pxx u + \pt u = 0$ can be rewritten as 
\begin{align*}
 \begin{cases}
 \pt p - \px p = -\frac{1}{2}\mathbbm{1}_\omega (p+r),\\
 \pt r + \px r = -\frac{1}{2} \mathbbm{1}_\omega (p+r).
 \end{cases}   
\end{align*}

The existence of a unique solution to the linear system \eqref{eq:vn} can be deduce from classical methods to treat transport equations as the boundaries $x=R$ and $x=-R$ are not characteristics. The explicit formulation of the solution via the characteristics method is given in the next section.

\subsection{Characteristics and propagation times} 
\label{ssec:characteristics-def}

For all $1\leq i\leq n$, the characteristic lines \(X_i\) of each equations of system \eqref{eq:vn} passing through the point \(\left(x_{0},t_{0}\right) \in \mathbb{R}\times [0, T]\) are given by 
\begin{align*}
X_i\left(t, x_{0}, t_{0}\right) := \lambda_i(t-t_{0})+x_{0}, \quad t \in[0, T].
\end{align*}

\begin{figure}[H]
{
\begin{tikzpicture}[xscale=0.8,yscale=0.8]
\draw [-latex] (-8,0) -- (10,0) ;
\draw  (5,0)  node [below]  {$x$} ; 

\draw [-latex] (-7,-1) -- (-7,6) ;
\draw  (-7,6)  node [left]  {$t$} ; 

\draw [densely dotted] (-2,0) -- (-2,5) ; 

\draw [densely dotted] (2,0) -- (2,5) ; 

\draw  (-2.5,5)  node [left]  {Damped region} ;

\draw  (1.8,5)  node [left]  {Undamped region} ;

\draw  (6,5)  node [left]  {Damped region} ;

\draw  (-2,0)  node [below]  {$-R$} ;
\draw  (2,0)  node [below]  {$R$} ;

\draw  (4,3)  node [above]  {$(x,t)$} ;

\draw [red, very thick] (-4,0) -- (4,3) ; 
\draw  (0,2.3)  node [below]  {$X_{1}$}; 

\draw  (-2,0)  node [below]  {$-R$} ;
\draw  (2,0)  node [below]  {$R$} ;

\draw  (4,3)  node [above]  {$(x,t)$} ;

\draw [red, very thick] (-4,0) -- (4,3) ; 
\draw [red, very thick] (2,0) -- (4,3) ; 
\draw [red, very thick] (0,0) -- (4,3) ; 
\draw  (0,2.3)  node [below]  {$X_{1}$}; 
\draw  (0.7,1.5)  node [below]  {$X_{i}$};
\draw  (3,1)  node [below]  {$X_{p}$};

\draw [blue, very thick] (10,0) -- (4,3);
\draw  (8,2)  node [below]  {$X_{n}$};
\draw[blue, very thick] (5,0) -- (4,3);
\draw [blue, very thick] (8,0) -- (4,3);
\draw  (5.2,1.5)  node [below]  {$X_{p+1}$}; 

\end{tikzpicture}  
}
\caption{Characteristics passing through a point $(x,t)\in \R\times\R_+$.} \label{fig:char}
\end{figure}
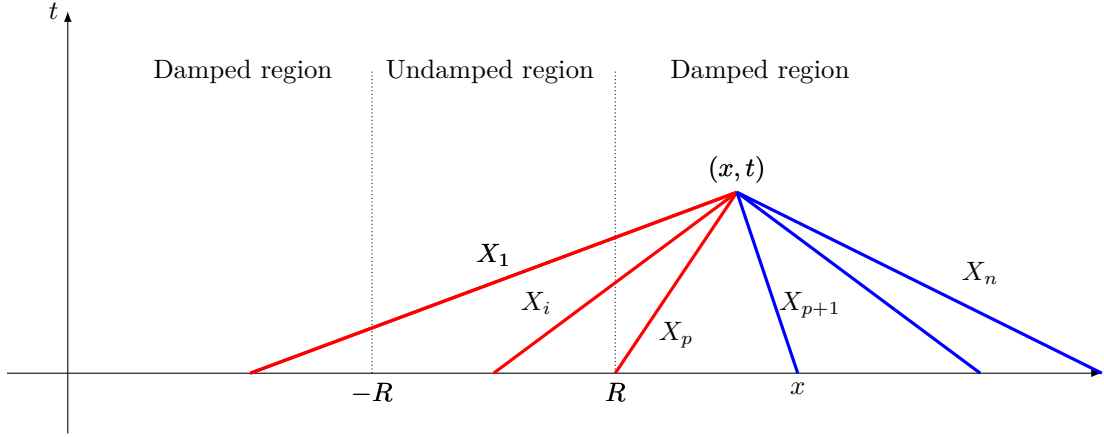
 
Let us already point out two facts that will essential in our study (cf. Figure \eqref{fig:char}):
\begin{enumerate}
    \item Once a characteristic has crossed and exited the undamped region $\omega^c$ it will never cross it again. This allows us to show that the time spend by all the characteristics in $\omega^c$ will be uniformly bounded in $x$ and $t$;
\item Depending on the directions of the characteristics (i.e. the sign of eigenvalues), some characteristics cross $\omega^c$ and some do not.
\end{enumerate}

As the proof of our result revolves very much around time-quantities related to the characteristics and the undamped region $\omega^c$, we to define the following quantities:
\begin{itemize}

\item $t_{i,en}(x_0,t_0)$ (``en'' for ``enter'') the time it takes the characteristic $X_i(\cdot,x_0,t_0)$ to intersect the line $x=-R$ (resp. $x=R$) if $\lambda_i<0$ (resp. $\lambda_i>0$) from the time $t=0$.
If it does not enter $\omega^c$, we set $t_{i,en}(x_0,t_0)=0.$ We have
\begin{align*}
\begin{cases}
t_{i,en}(x_0,t_0)=\max\left(0,t_0-\dfrac{x_0+R}{\lambda_i}\right) \quad \forall\: i\in [1,p],
\\t_{i,en}(x_0,t_0)=\max\left(0,t_0-\dfrac{x_0-R}{\lambda_i}\right) \quad \forall\: i\in [p+1,n].
\end{cases}
\end{align*}
\item $t_{i,ex}(x_0,t_0)$ (``ex'' for ``exit'') the time it takes the characteristic $X_i(\cdot,x_0,t_0)$ to intersect the line $x=R$ (resp. $x=-R$) if $\lambda_i<0$ (resp. $\lambda_i>0$) from $t=0$. If it does not exit $\omega^c$, we set $t_{i,ex}(x_0,t_0)=0.$  We have
\begin{align*}
\begin{cases}
t_{i,ex}(x_0,t_0)=\max\left(0,t_0-\dfrac{x_0-R}{\lambda_i}\right) \quad \forall\: i\in [1,p],
\\t_{i,ex}(x_0,t_0)=\max\left(0,t_0-\dfrac{x_0+R}{\lambda_i}\right) \quad \forall\: i\in [p+1,n].
\end{cases}
\end{align*}
\item $\tau_i(x_0,t_0)$ the time-length during which the characteristic $X_i(\cdot,t_0,x_0)$ is in $\omega^c$ i.e. $$\tau_i(x_0,t_0)=t_{i,ex}(x_0,t_0)-t_{i,en}(x_0,t_0).$$
\item Direct computations lead to an uniform in space and time bound for $\tau_i(x_0,t_0)$:
$$\sup_{x_0\in\R,\, t_0\in[0,T]} \tau_i(x_0,t_0)\leq\dfrac{2R}{|\lambda_i|}.$$

\end{itemize}

\begin{figure}[H]{
\begin{tikzpicture}[xscale=0.8,yscale=0.8]
\draw [-latex] (-8,0) -- (10,0) ;
\draw  (5,0)  node [below]  {$x$} ; 

\draw [-latex] (-7,-1) -- (-7,6) ;
\draw  (-7,6)  node [left]  {$t$} ; 

\draw [densely dotted] (-2,0) -- (-2,5) ; 

\draw [densely dotted] (2,0) -- (2,5) ; 

\draw  (-2.5,5)  node [left]  {Damped region} ;

\draw  (1.8,5)  node [left]  {Undamped region} ;

\draw  (6,5)  node [left]  {Damped region} ;

\draw  (-2,0)  node [below]  {$-R$} ;
\draw  (2,0)  node [below]  {$R$} ;

\draw  (4,3)  node [above]  {$(x,t)$} ;

\draw [red, very thick] (-4,0) -- (4,3) ; 
\draw  (0,2.3)  node [below]  {$X_{1}$}; 

\draw  (-2,0)  node [below]  {$-R$} ;
\draw  (2,0)  node [below]  {$R$} ;

\draw  (4,3)  node [above]  {$(x,t)$} ;


\draw [blue, very thick] (8,0) -- (4,3);
\draw  (5.2,1.5)  node [below]  {$X_{p+1}$}; 

\draw  (-7,2.5)  node [red][left]  {$t_{1,ex}(x,t)$} ;
\draw [dashed] (-7,2.27) -- (2,2.27);


\draw  (-7,1)  node [red][left]  {$t_{1,en}(x,t)$} ;
\draw [dashed] (-7,0.8) -- (-2,0.8);

\end{tikzpicture}  
}
\caption{Illustration on the quantities $t_{i,en}(x_0,t_0)$ and $t_{i,ex}(x_0,t_0)$.}
\label{fig:char2}

\end{figure}
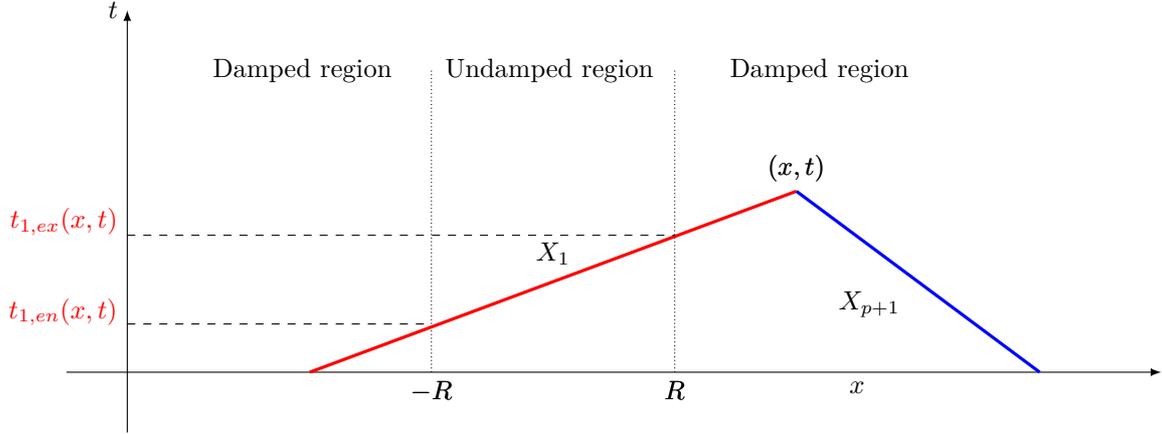

\subsection{Construction of a solution to system \eqref{eq:vn}}
\label{ssec:construction}

Let us now turn to the explicit construction of a solution to \eqref{eq:vn} at the point $(x,t)$ pictured in Figure \ref{fig:char}.

We are going to construct the solution component by component followings their associated characteristics and express it with semigroups.

\begin{remark}[Conservative semigroup]

Inside $\omega^c$, the solution of \eqref{eq:vn} is similar to the solution of \begin{align}\label{eq:NoDampingSyst}
\begin{cases}
\pt V + D \px V =0, & (t,x) \in (0,\infty) \times \R, \\
V(0,x) = V_0(x), & x \in \R,
\end{cases}
\end{align} that does not experience any dissipation.
From classical result, we may define $S_{c}(t)$ ($c$ for conservative) as the semigroup associated with \eqref{eq:NoDampingSyst}.
From a given initial data $V_0\in L^2(\R)$, the solution of \eqref{eq:NoDampingSyst} can be expressed as:
$$V(t)=S_c(t,0)V_0=\begin{pmatrix}v_{1,0}(x-\lambda_1t)\\\vdots\\v_{n,0}(x-\lambda_nt). \end{pmatrix} $$
From standard energy estimates, as the matrix $D$ is diagonal and therefore symmetric, we immediately infer that for $p\in[2,\infty]$,
$$\|V(t,\cdot)\|_{L^p(\R)}=\|S_c(t,0)V_0\|_{L^p(\R)}=\|V_0\|_{L^p(\R)}. $$
\end{remark}
\bigbreak
We shall decompose the semigroup $S_d(t,t'):=(S_{d,1}(t,t'),..,S_{d,n}(t,t'))$ where $S_{d,i}$ is the semigroup associated to the $i$-th equation of \eqref{eq:vn}, and similarly for $S_{c,i}$. Then, looking at the component $v_1$, one can use the method of characteristics in the damped region: going back along the characteristic $X_1(\cdot,x,t)$ until the non-characteristic boundary $x=R$ and deduce that
\begin{align} 
   \nonumber v_1(x,t)&=S_{d,1}(t,t_{1,ex}(x,t))v_1(x,t_{1,ex}(x,t)).
\end{align}
Then, inside $\omega^c$, the conservative semigroup is active  until the non-characteristic boundary $x=-R$ and therefore
\begin{align} 
   \nonumber v_1(x,t_{1,ex}(x,t))
    =S_{c,1}(t_{1,ex}(x,t),t_{1,en}(x,t))v_1(x,t_{1,en}(x,t)). \nonumber
\end{align}
And, finally, the dissipative group is active again:
\begin{align} \nonumber
v_1(x,t_{1,en}(x,t))=
  S_{d,1}(t_{1,en}(x,t),0))v_1(x,0).
\end{align}
This leads to
\begin{align} \label{SdScSd:1} v_1(x,t)=S_{d,1}(t,t_{1,ex}(x,t))S_{c,1}(t_{1,ex}(x,t),t_{1,en}(x,t))S_{d,1}(t_{1,en}(x,t),0)v_{1,0}(x).
\end{align}
Such formulation can be obtained for every point $(x,t)\in\mathbb{R}\times \R_+$ and the definitions of $t_{i,en}$ and $t_{i,ex}$ are made so the formula \eqref{SdScSd:1} is valid at any point.

But one easily remarks that depending on the point $(x,t)$ some characteristics will never cross $\omega^c$ before reaching $(x,t)$ and thus the above formulation can be simplified.
For example, for the component $v_n$ at the point $(x,t)$  pictured in Figure \ref{fig:char}, we have the simpler formulation:
\begin{align} 
   \nonumber v_n(x,t)&=S_{d,n}(t,0)v_1(x,0).
\end{align}

In all generality, following the same arguments as the ones exposed above, we can prove the following proposition.

\begin{proposition}[Representation of solutions to system \eqref{eq:vn}] \label{prop:ConstructSol}
Let $(x,t)\in\R\times \R_+$. For $1 \leq i \leq n$, the solution of \eqref{eq:vn} is given by
\begin{align*}
    v_i(x,t)=S_{d,i}(t,t_{i,ex}(x,t))S_{c,i}(t_{i,ex}(x,t),t_{i,en}(x,t))S_{d,i}(t_{i,en}(x,t),0)v_{i,0}(x).
\end{align*}
More precisely,
\begin{itemize}
    \item if $x \geq R$,
\begin{align*}
 \begin{cases}
  v_i(x,t)=S_{d,i}(t,t_{i,ex}(x,t))S_{c,i}(t_{i,ex}(x,t),t_{i,en}(x,t))S_{d,i}(t_{i,en}(x,t),0)v_{i,0}(x) & \forall\:i\in[1,p],
\\
    v_i(x,t)=S_{d,i}(t,0)v_{i,0}(x)  & \forall\:i\in[p+1,n];
 \end{cases}   
\end{align*}

\item if $x \leq -R$,
\begin{align*}
 \begin{cases}
    v_i(x,t)=S_{d,i}(t,0)v_{i,0}(x)  & \forall\:i\in[1,p],
\\
    v_i(x,t)=S_{d,i}(t,t_{i,ex}(x,t))S_{c,i}(t_{i,ex},t_{i,en}(x,t))S_{d,i}(t_{i,en}(x,t),0)v_{i,0}(x)  & \forall\:i\in[p+1,n];
 \end{cases}   
\end{align*}

\item if $x\in[-R,R]$,
\begin{align*}
 \begin{cases}
    v_i(x,t)=S_{c,i}(t,t_{i,en}(x,t))S_{d,i}(t_{i,en}(x,t),0)v_{i,0}(x)  & \forall\:i\in[1,p],
\\
    v_i(x,t)=S_{c,i}(t,t_{i,en}(x,t))S_{d,i}(t_{i,en}(x,t),0)v_{i,0}(x)  & \forall\:i\in[p+1,n].
 \end{cases}   
\end{align*}

\end{itemize}
\end{proposition}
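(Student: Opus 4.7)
My approach is via the classical method of characteristics combined with the semigroup property on the three sub-intervals determined by the entry and exit times into $\omega^c$. Fix $(x,t) \in \mathbb{R} \times \mathbb{R}_+$ and an index $i \in \{1, \ldots, n\}$. The $i$-th equation of \eqref{eq:vn}, read along the backward characteristic $s \mapsto X_i(s;x,t) = x - \lambda_i(t-s)$, becomes the ODE
$$\frac{d}{ds}\, v_i(X_i(s;x,t), s) = \mathbbm{1}_\omega(X_i(s;x,t)) \sum_{j=1}^n b_{ij}\, v_j(X_i(s;x,t), s),$$
whose right-hand side either vanishes (when the characteristic lies in $\omega^c = [-R,R]$) or produces the full damping/coupling contribution (when it lies in $\omega$).

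First I would observe that, since $\lambda_i \neq 0$ by \eqref{ass:eig0} and $\omega^c$ is the bounded interval $[-R,R]$, each straight characteristic $X_i(\cdot;x,t)$ enters $\omega^c$ at most once on $[0,t]$. Consequently the set $\{s \in [0,t] : X_i(s;x,t) \in \omega^c\}$ is either empty or a single interval, whose endpoints are exactly $t_{i,en}(x,t)$ and $t_{i,ex}(x,t)$ as defined in Subsection~\ref{ssec:characteristics-def}. The sign of $\lambda_i$ together with the position of $x$ selects the geometric configuration: for $\lambda_i < 0$ the characteristic sweeps to the right as $s$ decreases, entering $\omega^c$ through $x=R$ and exiting through $x=-R$, while the mirror picture holds for $\lambda_i > 0$. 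This yields the three case distinctions ($x \ge R$, $x \le -R$, $x \in [-R,R]$) stated in the proposition and fixes which of the three sub-intervals $[0,t_{i,en}]$, $[t_{i,en},t_{i,ex}]$, $[t_{i,ex},t]$ are actually non-degenerate.

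Next I would concatenate three forward evolutions on these sub-intervals. On a sub-interval where $\mathbbm{1}_\omega \equiv 1$ along $X_i$, the solution is propagated by the full dissipative flow of \eqref{eq:LS}, i.e.\ by $S_{d,i}$; on the middle sub-interval where $\mathbbm{1}_\omega \equiv 0$ along $X_i$, it is propagated by the pure transport flow $S_{c,i}$ introduced in Subsection~\ref{ssec:construction}, which moves data rigidly along the same characteristic direction. Composing along $[0,t]$ via the semigroup property produces exactly
$$v_i(x,t) = S_{d,i}(t, t_{i,ex})\, S_{c,i}(t_{i,ex}, t_{i,en})\, S_{d,i}(t_{i,en}, 0)\, v_{i,0}(x).$$
When a configuration forces $t_{i,en}$ or $t_{i,ex}$ to equal the endpoint $0$ or $t$ (which happens precisely in the ``exterior'' cases $x \le -R$ for $\lambda_i < 0$ or $x \ge R$ for $\lambda_i > 0$, as well as when the characteristic has not yet left $\omega^c$ by time $t$), the corresponding semigroup factor reduces to the identity and the formula collapses to the simplified expressions listed in the proposition.

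The only mildly subtle point is to justify that the two-parameter families $S_{c,i}(t,t')$ and $S_{d,i}(t,t')$ can indeed be spliced at the switching times $t_{i,en}(x,t)$ and $t_{i,ex}(x,t)$: this uses the non-characteristic nature of the boundaries $\{x = \pm R\}$ with respect to each $\lambda_i \ne 0$, which ensures that the traces of $v_i$ are well defined at the switching times and that the pointwise identity along $X_i$ is preserved. The rest is a routine case-by-case verification that the three exterior/interior configurations give the three formulas stated in Proposition~\ref{prop:ConstructSol}.
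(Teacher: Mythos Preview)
Your proposal is correct and follows essentially the same approach as the paper: the paper's argument (given in the discussion immediately preceding the proposition) is precisely the method of characteristics combined with the splicing of the dissipative and conservative semigroups at the non-characteristic switching times $t_{i,en}$ and $t_{i,ex}$, followed by the case-by-case simplification depending on the sign of $\lambda_i$ and the position of $x$ relative to $\pm R$. Your write-up is in fact somewhat more detailed than the paper's, which simply works out the example of $v_1$ for $x\geq R$ and then states that ``following the same arguments as the ones exposed above'' the general formula holds.
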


\begin{remark}[Inhomogeneous transport]
Let us mention that the notation $S_{d,1}(t,t')$ is just a shortcut to the usual formula for the solution of the inhomogeneous transport equation:

$$v_1(x,t)=S_{d,1}(t,t')v(x,t')=v(t',x-\lambda t+\lambda t')+\int_{t'}^tv(s,x-\lambda t+\lambda s)\dd s.$$
Therefore, when the solution has the general form \eqref{SdScSd:1}, we have

\begin{equation}\label{InhoTransp}v_1(x,t)=v_0(x-\lambda t)+\int_{t_{1,ex}(x,t)}^tv(s,x-\lambda t+\lambda s)\dd s+\int_{0}^{t_{1,en}(x,t)}v(s,x-\lambda t+\lambda s)\dd s.
\end{equation}
\end{remark}

Going forward, we shall always assume $V_0\in L^1(\R)\cap L^2(\R)$ and consider the corresponding solution constructed as in Proposition \ref{prop:ConstructSol}.

\section{Strategy of proof and toy problems}
\label{sec:strategy-toys}

\subsection{Key difficulties and strategy of the proof}
\label{ssec:strategy}

The first difficulty encountered when trying to prove time-decay estimates comes from the fact that, on their own, \textit{the semigroups $S_{d,i}$ are not dissipative}: the decay can only be recovered from the coupling between all the equations and the (SK) condition ensures that this coupling generates dissipation for all the components (even for the one that are not directly damped).

To understand the difficulties of our problem, one must have in mind the following facts:
\begin{itemize}
    \item It is only possible to justify dissipation for the solution $V$ if all the semigroups $S_{d,i}$ are  active on a same time-interval i.e. the full semigroup $S_d$ needs to be active (for instance, looking at the effect of $S_{d,1}$ on the first component does not, in general, imply any time-decay properties for the solution $V$ nor for the component $v_1$);
     \item This means that if one of the $S_{c,i}$ is active on a time-interval then the whole solution does not experience any decay on this time-interval;
      \item On their own, the semigroups $S_{d,i}$ are  bounded thanks to the positive semidefiniteness of $B$, and the semigroups $S_{c,i}$ are conservative.
     \end{itemize}

On the other hand, to justify our result, we remark that, the conservative semigroups $S_{c,i}$ are only active on a finite union of finite-time interval. Measuring this union of time-intervals (which depend on the eigenvalues of the matrix $A$) is key in quantifying the delayed time-decay experienced by the solution. As one needs the full semigroup $S_d(t,t')=(S_{d,1}(t,t'),\ldots,S_{d,n}(t,t'))$ to be active on a time-interval to ensure dissipation we have, roughly speaking, that the time spend by each components in the undamped region generates delay for all the other components.

Going forward, we shall restrict the proof to the case $x\geq R$ as the other cases can be dealt with in a similar fashion. From Proposition \ref{prop:ConstructSol}, we have:
\begin{align*}
 \begin{cases}
  v_i(x,t)=S_{d,i}(t,t_{1,ex}(x,t))S_{c,i}(t_{i,ex}(x,t),t_{1,en}(x,t))S_{d,i}(t_{i,en}(x,t),0)v_{i,0}(x) \quad \forall\:i\in[1,p], \\
    v_i(x,t)=S_{d,i}(t,0)v_{i,0}(x)  \quad \forall\:i\in[p+1,n].
 \end{cases}   
\end{align*}

This tells us that, for $1\leq i \leq p$, the dissipative semigroup $S_{d,i}$ is active on the interval $[0,t_{i,en}(x,t)]\cup [t_{i,ex}(x,t),t]$ and the conservative one on $[t_{i,en}(x,t),t_{i,ex}(x,t)]$; on the other hand, for $i\geq p+1$, the dissipative semigroup $S_{d,i}$ is active on the whole interval $[0,t]$.

From this, it follows that at least one component $S_{c,i}$ of $S_c$ is active in the union of intervals
\begin{equation}\label{UnionIntervalDelay} \mathcal{I}(x,t)=\bigcup_{i=1}^p[t_{i,en}(x,t),t_{i,ex}(x,t)]
\end{equation}
and all the components of $S_d$ will be active in its complement. 

The first step of the proof will be to justify rigorously that the delay is directly link to the length of $\cI$. This will follow from arguments that we will construct in the scalar case $n=1$. Notice that as each characteristics cannot spend more than $\dfrac{2R}{|\lambda_i|}$ in $\omega^c$, we have the gross bound
\begin{align}\label{supI}
\sup_{x\geq R,t>0}|\cI(x,t)|\leq \displaystyle\sum_{i=1}^p\dfrac{2R}{|\lambda_i|}.
\end{align}
The use of this inequality will be the last point of the proof of Theorem \ref{MainThm}.
However, in general, the bound in \eqref{supI} is not attained for small times. Indeed, if $t$ is not large enough then there could be overlaps between the intervals of $\mathcal{I}$ and then the upper bound may not be reached for any $x$. From direct computations, we remark that the supremum in \eqref{supI} can be attained whenever there is no overlapping between all the intervals of $\mathcal{I}$.

\subsection{Toy problems}
\label{ssec:toys}

Before tackling the proof of the main theorem, we will present a proof in the scalar case $n=1$ (assuming  $D=\lambda_1<0$) and in the case $n=p=2$. 
\subsubsection{Analysis of the scalar case ($n=1$)}
\label{ssec:scalar}

In this scalar case, one could directly obtain the result from computing explicitly the solution of the system but here we prefer to develop a method that will be applicable to the case of multiple components. Note that the procedure to retrieve decay for the high or low frequencies follows the exact same lines and thus we only focus on the high-frequency one in the sequel.

Let us fix $t>0$; from Proposition \ref{prop:ConstructSol}, we can deduce the following facts:
\begin{itemize}
    \item for $x\geq R$, the dissipative semigroup is active on the time-interval $[0,t_{1,en}(x,t)]\cup [t_{1,ex}(x,t),t]$ and the conservative one on the time-interval $[t_{1,en}(x,t),t_{1,ex}(x,t)]$;
    \item for $-R\leq x \leq R$, the conservative semigroup is active on the time-interval $[t_{en}(x,t),t]$ and the dissipative one on $[0,t_{en}(x,t)]$;
    \item for $x\leq -R$, the dissipative semigroup is active on the time-interval $[0,t]$.
\end{itemize}

 \begin{figure}[H]{
\begin{tikzpicture}[xscale=0.6,yscale=0.6]
\draw [-latex] (-8,0) -- (6,0) ;
\draw  (6,0)  node [below]  {$x$} ; 

\draw [-latex] (-7,-1) -- (-7,6) ;
\draw  (-7,6)  node [left]  {$t$} ; 

\draw [densely dotted] (-2,0) -- (-2,5) ; 

\draw [densely dotted] (2,0) -- (2,5) ; 

\draw  (-4,5)  node [left]  {$\omega$} ;

\draw  (0.7,5)  node [left]  {$\omega^c$} ;

\draw  (4.5,5)  node [left]  {$\omega$} ;

\draw  (-2,0)  node [below]  {$-R$} ;
\draw  (2,0)  node [below]  {$R$} ;

\draw  (0,3)  node [above]  {$(x',t')$} ;

\draw [red, very thick] (-2.8,0) -- (0,3) ; 
\draw  (-1.58,2.3)  node [below]  {$X_{1}$}; 

\draw  (-2,0)  node [below]  {$-R$} ;
\draw  (2,0)  node [below]  {$R$} ;

\draw  (-3,3)  node [above]  {$(x,t)$} ;

\draw [red, very thick] (-6,0) -- (-3,3) ; 
\draw  (-4.6,2.3)  node [below]  {$X_{1}$}; 
\draw [red, very thick] (0,0) -- (3,3) ; 
\draw  (1.4,2.3)  node [below]  {$X_{1}$};
\draw  (3,3)  node [above]  {$(x'',t'')$} ;

\end{tikzpicture}  
}
\caption{Examples of characteristics crossing $\omega$ or $\omega^c$.}
\label{fig:char1}
\end{figure}
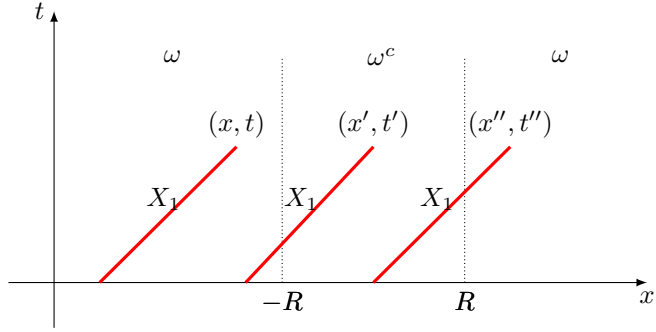

Let us now see how to concretely recover Shizuta-Kawashima's decay estimates for the component $v_1$ with a delay of $\tau_1=\frac{2R}{\lambda_1}$.
To do so, we divide the space region into three parts and bound the time spend by each characteristics in $\omega^c$.
The main difficulty is that the entering and exiting time depend on $x$ and therefore one cannot directly apply the semigroup bound we have from the classical theory. The following lemma solves this issue.

\begin{lemma}For a fixed $t>0$, we have
\begin{align}\label{DecayTex} \|v^h(\cdot,t)\|_{L^2(x\geq R)}\leq e^{-\gamma(t-\bar{\tau})}\|v_0\|_{L^2(\R)}.\end{align}
\end{lemma}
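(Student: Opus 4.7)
My plan is to exploit the semigroup decomposition of Proposition~\ref{prop:ConstructSol} together with the fact that, in the scalar setting, each propagator has a completely explicit expression. Composing them along a single characteristic telescopes the transports and adds the dissipative exponentials, leaving a pointwise multiplicative factor. The delayed $L^2$ bound then follows from the uniform geometric control $\tau_1\le\bar\tau$ recalled in Section~\ref{ssec:characteristics-def}.

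Concretely, for every $(x,t)$ with $x\ge R$, Proposition~\ref{prop:ConstructSol} yields
\[
v_1(x,t)=S_{d,1}(t,t_{1,ex}(x,t))\,S_{c,1}(t_{1,ex}(x,t),t_{1,en}(x,t))\,S_{d,1}(t_{1,en}(x,t),0)\,v_{1,0}(x).
\]
In the scalar case the conservative propagator is pure transport, $S_{c,1}(s,s')u(x)=u(x-\lambda_1(s-s'))$, while the dissipative one carries in addition a multiplicative exponential, $S_{d,1}(s,s')u(x)=e^{-b(s-s')}u(x-\lambda_1(s-s'))$, with $b>0$ the scalar damping coefficient. Composing the three blocks, the transports telescope into a global shift by $-\lambda_1 t$ and the exponentials combine to produce the explicit pointwise identity
\[
v_1(x,t)=e^{-b\,(t-\tau_1(x,t))}\,v_{1,0}(x-\lambda_1 t),\qquad \tau_1(x,t):=t_{1,ex}(x,t)-t_{1,en}(x,t)\ge 0.
\]

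The main obstacle — and precisely the reason the delay $\bar\tau$ appears in the statement — is that the effective exponent $\tau_1(x,t)$ genuinely depends on $x$, so the usual $L^2$ semigroup estimate with a fixed time cannot be applied verbatim. This is resolved by the uniform geometric bound of Section~\ref{ssec:characteristics-def},
\[
\sup_{x\in\R,\,t>0}\tau_1(x,t)\le\frac{2R}{|\lambda_1|}=\bar\tau,
\]
valid because any characteristic crosses the window $\omega^c=(-R,R)$ of length $2R$ at most once and at constant speed $|\lambda_1|$. Inserting this into the pointwise formula gives $|v_1(x,t)|\le e^{-\gamma(t-\bar\tau)}|v_{1,0}(x-\lambda_1 t)|$ with $\gamma:=b$; squaring, integrating over $\R$, and using the translation invariance of the Lebesgue measure yields $\|v_1(\cdot,t)\|_{L^2(\R)}\le e^{-\gamma(t-\bar\tau)}\|v_{1,0}\|_{L^2(\R)}$, which in particular dominates the restricted norm on $\{x\ge R\}$. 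Since the Fourier projection $\cdot^h$ is a contraction on $L^2(\R)$, the same bound passes to $v_1^h$ and delivers the claim. The whole difficulty thus condenses into the single uniform control $\tau_1\le 2R/|\lambda_1|$, and it is precisely this geometric ingredient that will later be coupled and summed across components to produce the $\bar\tau$ of Theorem~\ref{MainThm}.
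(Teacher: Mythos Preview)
Your argument is correct for the scalar case: along a single characteristic the equation is a linear ODE with a localized damping coefficient, so the explicit formula $v_1(x,t)=e^{-b(t-\tau_1(x,t))}\,v_{1,0}(x-\lambda_1 t)$ is valid, and the pointwise bound $\tau_1\le 2R/|\lambda_1|=\bar\tau$ followed by translation invariance of Lebesgue measure and the $L^2$-contractivity of the high-frequency cutoff yields the stated inequality.

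The paper, however, deliberately avoids this explicit computation. As it says just before the lemma, ``one could directly obtain the result from computing explicitly the solution of the system but here we prefer to develop a method that will be applicable to the case of multiple components.'' The paper's proof instead partitions $\{x\ge R\}$ into $\cD_1\cup\cD_2\cup\cD_3$, further subdivides $\cD_1$ into intervals $[a_i,a_{i+1}]$ of width $O(1/N)$, freezes the $x$-dependent entering and exiting times at the interval endpoints (controlling the defect via positive/negative parts), applies the semigroup estimates with these \emph{constant} times, and then lets $N\to\infty$. The point of this machinery is that when $n\ge 2$ the components are genuinely coupled through $P^{-1}BP$, so no pointwise multiplicative formula exists and one cannot simply read off the exponent from an explicit solution; the freezing-and-limiting argument is what survives, producing the estimates \eqref{FinalVn2} and \eqref{FinalVn}. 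Your approach is cleaner and entirely adequate for the lemma as stated, but it does not extend to the coupled system, which is precisely the regime the paper is building toward. Your closing sentence suggests the scalar bound will be ``coupled and summed across components''; be aware that this summing is not a consequence of the scalar explicit formula but requires the interval-partition argument the paper develops here.
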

\begin{proof}
For $x\geq R$, we have
$$ v_1(x,t)=S_{d,1}(t,t_{1,ex}(x,t))S_{c,1}(t_{1,ex}(x,t),t_{1,en}(x,t))S_{d,1}(t_{1,en}(x,t),0)v_{1,0}(x).$$
We divide the region $x\geq R$ into three parts: $\mathcal{D}_1\cup \cD_2\cup\cD_3$ where
$$\cD_1:=\{R\leq x\leq t\lambda_1-R\},\quad \cD_2:=\{ t\lambda_1-R\leq x\leq  t\lambda_1+R\},\quad \cD_3:=\{ t\lambda_1+R\leq x\}. $$

\noindent\textbf{Case 1: $x\in\cD_3$.} The proof is straightforward as $t_{1,en}(x,t)=t_{1,ex}(x,t)=0$: the solutions never cross $\omega^c$ and thus decays thanks to the (SK) condition.
We have
\begin{align*}\|v_1(\cdot,t)\|_{L^2(\cD_3)} &\leq e^{-\gamma t}\|v_{1,0}\|_{L^2(\R)}.
 \end{align*}

\noindent\textbf{Case 2: $x\in\cD_1$.} By definition, we have the following pointwise inequalities:
 $$\tau \leq t_{1,ex}(x,t)\leq t, \quad  0\leq t_{1,en}(x,t)\leq t-\tau_1\quad \text{and} \quad t_{1,ex}(x,t)-t_{1,en}(x,t)= \tau_1.$$
 The difficulty is that we cannot use directly these inequalities to prove \eqref{DecayTex} as the entering and exiting times depend on $x$. To solve this we are going to decompose $\cD_3$ into small interval and approximate the solution on each intervals: 
 $$\cD_1=\bigcup_{i=1}^N[a_i,a_{i+1}]\quad \text{s.t. } a_1=R,\;a_n=t\lambda_1-R \text{ and }a_{i+1}-a_i\leq \dfrac{1}{N} \text{ for } N\in\N^*.$$

 On each intervals $[a_i,a_{i+1}]$, using \eqref{InhoTransp}, we have
 \begin{align*}\int_{a_i}^{a_i+1}|v_1(x,t)|^2\dd x&=\int_{a_i}^{a_i+1}\left|v_0(x-\lambda t)+\int_{t_{1,ex}(x,t)}^tv_1(s,x-\lambda t+\lambda s)\dd s+\int_{0}^{t_{1,en}(x,t)}v_1(s,x-\lambda t+\lambda s)\dd s\right|^2\dd x
 \\&= \int_{a_i}^{a_i+1}\left|v_0(x-\lambda t)+\int_{0}^tv_1(s,x-\lambda t+\lambda s)\dd s-\int_{t_{1,en}(x,t)}^{t_{1,ex}(x,t)}v_1(s,x-\lambda t+\lambda s)\dd s\right|^2\dd x.
 \end{align*}
 From here, we define the quantity
 \begin{align*}
 \mathcal{Q}=\int_{0}^tv_1(s,x-\lambda t+\lambda s)\dd s-\int_{t_{1,en}(x,t)}^{t_{1,ex}(x,t)}v_1(s,x-\lambda t+\lambda s)\dd s
 \end{align*}
 and we assume that $u_0(x-\lambda_1t)+\cQ$ is positive; indeed, the case where it is negative can be treated in a similar manner by reversing all the inequalities below.
 
 Let us have a closer look at the second term of $\cQ$. Defining the positive and negative part of the quantity inside the integral by 
 $v_1^+=\max(0,v_1(s,x-\lambda t+\lambda s))$ and $v_1^-=\max(0,-v_1(s,x-\lambda t+\lambda s))$,  we have
  \begin{align*}
 \int_{a_i}^{a_i+1}\int_{t_{1,en}(x,t)}^{t_{1,ex}(x,t)}v_1(s,x-\lambda t+\lambda s)\dd s \dd x &= \int_{a_i}^{a_i+1}\int_{t_{1,en}(x,t)}^{t_{1,ex}(x,t)}v_1^+-v_1^-\dd s \dd x. \end{align*}
Then, thanks to the inequality
$$ t_{1,en}(a_i)\leq t_{1,en}(x,t)\leq t_{1,en}(a_{i+1})\leq t_{1,ex}(a_i)\leq t_{1,ex}(x,t)\leq t_{1,ex}(a_{i+1}),$$
where we omit to write the $t$-dependence inside $t_{1,en}$ for the sake of brevity,  we have $$[t_{1,en}(a_{i+1}),t_{1,ex}(a_{i})]\subset [t_{1,en}((x,t)),t_{1,ex}(x,t)] \subset [t_{1,en}(a_{i}),t_{1,ex}(a_{i+1})],$$ which allows us to bound the integral as follows (see Figure \ref{fig:aibounds}):
\begin{align*}
\int_{a_i}^{a_i+1}\int_{t_{1,en}(a_{i+1})}^{t_{1,ex}(a_i)}v_1^+\leq\int_{a_i}^{a_i+1}\int_{t_{1,en}(x,t)}^{t_{1,ex}(x,t)}v_1^+\leq \int_{a_i}^{a_i+1}\int_{t_{1,en}(a_{i+1})}^{t_{1,ex}(a_i)}v_1^+
 \end{align*}
 and
 \begin{align*}
-\int_{a_i}^{a_i+1}\int_{t_{1,en}(a_i)}^{t_{1,ex}(a_{i+1})}v_1^-\leq-\int_{a_i}^{a_i+1}\int_{t_{1,en}(x,t)}^{t_{1,ex}(x,t)}v_1^-\leq -\int_{a_i}^{a_i+1}\int_{t_{1,en}(a_i)}^{t_{1,ex}(a_{i+1})}v_1^-.
 \end{align*}
 
 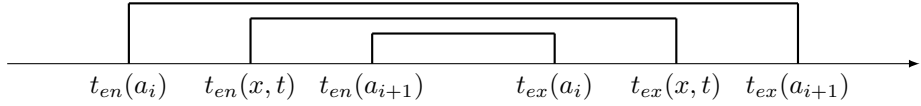
\begin{figure}[H]{
\begin{tikzpicture}[xscale=0.8,yscale=0.8]
\draw [-latex] (-7,0) -- (8,0) ;

\draw  (-5,0)  node [below]  {$t_{en}(a_i)$} ;
\draw  (-3,0)  node [below]  {$t_{en}(x,t)$} ;
\draw  (-1,0)  node [below]  {$t_{en}(a_{i+1})$} ;

\draw  (2,0)  node [below]  {$t_{ex}(a_i)$} ;
\draw  (4,0)  node [below]  {$t_{ex}(x,t)$} ;
\draw  (6,0)  node [below]  {$t_{ex}(a_{i+1})$} ;

\draw[thick] (-1,0) -- (-1,0.5) ;
\draw[thick] (2,0) -- (2,0.5) ;
\draw[thick] (-1,0.5) -- (2,0.5) ;

\draw[thick] (-3,0) -- (-3,0.75) ;
\draw[thick] (4,0) -- (4,0.75) ;
\draw[thick] (-3,0.75) -- (4,0.75) ;

\draw[thick] (-5,0) -- (-5,1) ;
\draw[thick] (6,0) -- (6,1) ;
\draw[thick] (-5,1) -- (6,1) ;


\end{tikzpicture}  
}\caption{Decomposition of the time-interval used in the proof of Proposition \ref{DecayTex}.}
\label{fig:aibounds}
\end{figure}

 Therefore, we deduce that, on $[a_i,a_{i+1}]$,
 \begin{align*}
 \int_0^tv_1^+-\int_{t_{1,en}(a_{i+1})}^{t_{1,ex}(a_{i})}v_1^+-\int_0^tv_1^-+\int_{t_{1,en}(a_i)}^{t_{1,ex}(a_{i+1})}v_1^-\leq \cQ;
 \end{align*}
 thus
 \begin{align*}
 \int_{t_{1,ex}(a_{i})}^tv_1^++\int_{0}^{t_{1,en}(a_{i+1})}v_1^+-\int_{t_{1,ex}(a_{i+1})}^tv_1^-+\int_{0}^{t_{1,en}(a_i)}v_1^-\leq \cQ,
 \end{align*}
 which can be rewritten as
  \begin{align*}
 \int_{t_{1,ex}(a_{i+1})}^tv_1+\int_{t_{1,ex}(a_{i})}^{t_{1,ex}(a_{i+1})}v_1^++\int_{0}^{t_{1,en}(a_{i})}v_1+\int_{t_{1,en}(a_{i})}^{t_{1,en}(a_{i})}v_1^-\leq \cQ.
 \end{align*}
 
 Similar arguments lead to the upper bound:
  \begin{align*}
 \cQ \leq \int_{t_{1,ex}(a_{i})}^tv_1+\int_{t_{1,ex}(a_{i+1})}^{t_{1,ex}(a_{i})}v_1^++\int_{0}^{t_{1,en}(a_{i+1})}v_1+\int_{t_{1,en}(a_{i})}^{t_{1,en}(a_{i+1})}v_1^-.
 \end{align*}
 Therefore, gathering the above estimates we obtain
\begin{align*}\int_{a_i}^{a_i+1}|v_1(x,t)|^2dx&\leq \int_{a_i}^{a_i+1}\left|v_{1,0}(x-\lambda t)+\int_{t_{1,ex}(a_{i})}^tv_1+\int_{t_{1,ex}(a_{i+1})}^{t_{1,ex}(a_{i})}v_1^++\int_{0}^{t_{1,en}(a_{i+1})}v_1+\int_{t_{1,en}(a_{i})}^{t_{1,en}(a_{i+1})}v_1^-\right|^2
\\&\leq \int_{a_i}^{a_i+1}\left|S_d(t,t_{ex}(a_i))S_c(t_{ex}(a_i),t_{en}(a_{i+1})))S_d(t_{en}(a_{i+1}),0)v_{1,0}(x)+R_{i}\right|^2
 \end{align*}
 where $\displaystyle R_i=\int_{t_{1,ex}(a_{i+1})}^{t_{1,ex}(a_{i})}v_1^++\int_{t_{1,en}(a_{i})}^{t_{1,en}(a_{i+1})}v_1^-$. 
 
 We are now in position to use the dissipative properties of the semigroup $S_{d,1}$ as the entering and exiting time do not depend on $x$ anymore. Bounding the right-hand side integral by the integral on $\R$ and using the properties of the semigroups $S_{d,1}$ and $S_{c,1}$, we get
 \begin{align*}\int_{a_i}^{a_{i+1}}|v_1(x,t)|^2\dd x&\leq e^{-\gamma(t-t_{ex}(a_i))}e^{-\gamma t_{en}(a_{i+1})}\|v_{1,0}\|_{L^2([a_i,a_{i+1}])}+|R_i|^2.
 \end{align*}
 Then, as by definition $t_{1,ex}(a_i)-t_{1,en}(a_{i+1})\leq \tau+\dfrac{1}{N}$ and $t_{en}(a_{i+1}))-t_{en}(a_i)\leq \dfrac{C}{N}$ and similarly for $t_{ex}$, by summing on $i$ and taking the limit as $N\to\infty$, we have
 \begin{align*}\int_{\cD_1}|v_1(x,t)|^2\dd x&\leq e^{-\gamma(t-\tau)}\|v_{1,0}\|_{L^2(\R)}
 \end{align*}
 and therefore 
  \begin{align*}\|v_1(x,t)\|_{L^2(\cD_1)}&\leq e^{-\gamma(t-\tau)}\|v_{1,0}\|_{L^2(\R)}.
 \end{align*}
 
\noindent \textbf{Case 3: $x\in\cD_2$.} We have the following pointwise inequalities:
$$0 \leq t_{1,ex}(x,t)\leq \tau, \quad \text{and} \quad t_{1,en}(x,t)=0.$$
Therefore, with a similar splitting as before, we conclude that
  \begin{align*}\int_{\lambda_1t-R}^{\lambda_1t+R}|v_1^h(x,t)|^2\dd x&\leq e^{-\gamma(t-\tau)}\|v_{1,0}\|_{L^2(\R)}.
 \end{align*}
\end{proof}

\begin{remark}[Decomposition of the initial data]\label{rk:v0-decomp}
We remark that a similar way of proceeding could be to decompose the initial data into $(v_{1,0}^i)_{i=1,\cdots,N}$ such that each $v^i_{1,0}$ is supported in $[a_i-\lambda t,a_{i+1}-\lambda t]$. Then, since the system is linear, the superposition principle applies and we can study each $v_{1}^i$ separately.
\end{remark}

\subsubsection{Analysis of the case $n=p=2$} 
\label{ssec:toy2}

Let us also study a second toy problem, the particular case $n=p=2$, from which we will be able to deduce the general result for $n$ components.

In this case, the solutions of the two transport equations are given by 
\begin{align*}&v_1(x,t)=v_{1,0}(x-\lambda_1 t)+\int_{0}^t\sum_{i=1}^2b_{1,i}v_i(s,x-\lambda_1 t+\lambda_1 s)\dd s-\int_{t_{1,en}(x,t)}^{t_{1,ex}(x,t)}\sum_{i=1}^2b_{1,i}v_i(s,x-\lambda_1 t+\lambda_1 s)\dd s,
\\&v_2(x,t)=v_{2,0}(x-\lambda_2 t)+\int_{0}^t\sum_{i=1}^2b_{2,i}v_i(s,x-\lambda_2 t+\lambda_2 s)\dd s-\int_{t_{2,en}(x,t)}^{t_{2,ex}(x,t)}\sum_{i=1}^2b_{2,i}v_i(s,x-\lambda_2 t+\lambda_2 s)\dd s.
\end{align*}
Denoting $\cB_1:=\sum_{i=1}^2b_{1,i}v_i(s,x-\lambda_1 t+\lambda_1 s)$ and $\cB_2:=\sum_{i=1}^2b_{2,i}v_i(s,x-\lambda_2 t+\lambda_2 s)$, we have
\begin{align} \label{VB1}
    |V(x,t)|^2=\left|\begin{pmatrix}
    v_{1,0}(x-\lambda_1 t)\\v_{2,0}(x-\lambda_2 t)\end{pmatrix}+
    \begin{pmatrix}
    \int_0^t \cB_1 -\int_{t_{1,en}(x,t)}^{t_{1,ex}(x,t)} \cB_1 \\ 
    \int_0^t \cB_2 -\int_{t_{2,en}(x,t)}^{t_{2,ex}(x,t)} \cB_2
    \end{pmatrix}\right|^2.
\end{align}
Let us assume that the quantities in the two rows of \eqref{VB1} are positive, the other three scenarios being treatable in a similar way as we always have upper and lower bounds. Then, defining the sequence $(a_i)_{i\in\{1,\ldots,N\}}$ as in Section \ref{ssec:scalar} on the space-interval $[R,t\lambda_1+R]$ and proceeding as in the scalar case $n=1$, one obtains

\begin{align}
    \int_{a_i}^{a_{i+1}}|V(x,t)|^2=\left|\begin{pmatrix}
   S_{d,1}(t,t_{1,ex}(a_i))S_c(t_{1,ex}(a_i),t_{1,en}(a_{i+1})))S_d(t_{1,en}(a_{i+1}),0)v_{1,0}(x)+R_{1,i}\\S_{d,2}(t,t_{2,ex}(a_i))S_c(t_{2,ex}(a_i),t_{2,en}(a_{i+1})))S_d(t_{2,en}(a_{i+1}),0)v_{1,0}(x)+R_{2,i}\end{pmatrix}\right|^2
\end{align}
where for $j=1,2$
$$\displaystyle R_{j,i}=\int_{t_{j,ex}(a_{i+1})}^{t_{j,ex}(a_{i})}\cB_j^++\int_{t_{j,en}(a_{i})}^{t_{j,en}(a_{i+1})}\cB_j^-.$$

Note that, for $x \geq t\lambda_1+R$, we have $t_{1,en}=t_{2,en}=t_{1,ex}=t_{2,ex}=0$ and therefore we do not need to decompose the space-interval farther.
Then, since it is only possible to recover dissipation when $S_{d,1}$ and $S_{d,2}$ are active on a same time-interval, we obtain
\begin{align*}\int_{a_i}^{a_{i+1}}|V^h(x,t)|^2dx&\leq e^{-\gamma(t-|\cI_i|)}\|V_0\|_{L^2(\R)}+\sum_{j=1}^2|R_{j,i}|^2
 \end{align*}
 where $|\cI_i|=t_{1,ex}(a_i)-t_{1,en}(a_{i+1})+t_{2,ex}(a_{i})-t_{2,en}(a_{i+1})$.
 Summing on $i$ and taking the limit as $N\to\infty$, we obtain
 \begin{align}\label{FinalVn2}
     \|V(t)^h\|_{L^2(x\geq R)}\leq e^{-\gamma(t-\sup_{x\geq R}|\cI(x,t)|)}\|V_0\|_{L^2(\R)}
 \end{align}
 where $\cI(x,t)=[t_{1,en}(x,t),t_{1,ex}(x,t)]\cup[t_{2,en}(x,t),t_{2,ex}(x,t)]$.
 
Similar estimates hold true for any number of components.

\section{Proofs of the main theorems}
\label{sec:proof-linear}

\subsection{Proof of Theorem \ref{MainThm}}
\label{ssec:thm1}
In the general setting, the previous analysis leads to the following corollary.
\begin{corollary}\label{CorI}
Let $V$ be the solution of \eqref{eq:V} associated with the initial data $V_0\in L^1(\mathbb{R})\cap L^2(\mathbb{R})$.

If $x\geq R$, at least one component of the conservative semigroup $S_c$ is active in
\begin{equation} \label{I+} \mathcal{I}(x,t)=\bigcup_{i=1}^p[t_{i,en}(x,t),t_{i,ex}(x,t)].
\end{equation}

If $x\leq -R$, at least one component of the conservative semigroup $S_c$ is active in
\begin{equation} \mathcal{I}(x,t)=\bigcup_{i=p+1}^{n}[t_{i,en}(x,t),t_{i,ex}(x,t)].
\end{equation}
And, if $x\in [-R,R]$, at least one component of the conservative semigroup $S_c$ is active in
\begin{equation} \mathcal{I}(x,t)=\bigcup_{i=1}^{n}[t_{i,en}(x,t),t].
\end{equation}

Moreover,
\begin{equation} \label{BoundI}
\sup_{x\in\mathbb{R},t>0}|\mathcal{I}(x,t)|\leq \bar{\tau}=\max\left(\sum_{i=1}^p \frac{2R}{\lambda_i},\sum_{i=p+1}^n \frac{2R}{\lambda_i}\right).
\end{equation}
\end{corollary}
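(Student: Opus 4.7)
The plan is to show that the Corollary is essentially a bookkeeping consequence of Proposition \ref{prop:ConstructSol} together with the geometric bound $\tau_i(x_0,t_0)\leq 2R/|\lambda_i|$ established in Section \ref{ssec:characteristics-def}. The structure of the argument would mirror the three-case analysis already laid out in Proposition \ref{prop:ConstructSol}, namely $x\geq R$, $x\leq -R$, and $x\in[-R,R]$, and for each of them I would read off from the explicit representation formula which $S_{c,i}$ is active and on which sub-interval.

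More precisely, for $x\geq R$, Proposition \ref{prop:ConstructSol} gives $v_i(x,t)=S_{d,i}(t,t_{i,\mathrm{ex}})S_{c,i}(t_{i,\mathrm{ex}},t_{i,\mathrm{en}})S_{d,i}(t_{i,\mathrm{en}},0)v_{i,0}(x)$ for $1\leq i\leq p$ and $v_i(x,t)=S_{d,i}(t,0)v_{i,0}(x)$ for $p+1\leq i\leq n$, so only components with $\lambda_i<0$ can contribute to the active period of $S_c$, and this period is exactly $[t_{i,\mathrm{en}},t_{i,\mathrm{ex}}]$, yielding \eqref{I+}. The case $x\leq -R$ is symmetric with the indices swapped. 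For $x\in[-R,R]$ all characteristics are, at time $t$, still inside $\omega^c$ (they cannot have exited without first re-entering, which is excluded by the observation in Section \ref{ssec:characteristics-def} that once a characteristic exits $\omega^c$ it never returns), so for every $i$ the conservative semigroup is active on $[t_{i,\mathrm{en}}(x,t),t]$, which gives the third identity.

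The bound \eqref{BoundI} is then obtained by estimating the measure of each union. In the two outer cases the intervals $[t_{i,\mathrm{en}},t_{i,\mathrm{ex}}]$ are indexed over disjoint subsets of $\{1,\dots,n\}$ (negatives vs.\ positives), so the subadditivity of Lebesgue measure gives
\[
|\mathcal{I}(x,t)|\leq \sum_{i\in \mathcal{J}}\tau_i(x,t)\leq \sum_{i\in \mathcal{J}}\frac{2R}{|\lambda_i|},
\]
where $\mathcal{J}=\{1,\dots,p\}$ or $\{p+1,\dots,n\}$, directly producing one of the two sums inside the maximum. For $x\in[-R,R]$, because every interval is of the form $[t_{i,\mathrm{en}},t]$, the union is simply $[\min_i t_{i,\mathrm{en}},t]$, so $|\mathcal{I}(x,t)|\leq \max_i(t-t_{i,\mathrm{en}}(x,t))\leq 2R/\min_i|\lambda_i|$, which is always dominated by at least one of the two sums (each of them contains a term of this size). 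Taking $\sup$ over $x\in\R$ and $t>0$ concludes the proof of \eqref{BoundI}.

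I do not expect a serious obstacle: everything is already prepared by the uniform bound on $\tau_i$ and by the explicit semigroup decomposition. The only point that requires a moment of care is the $[-R,R]$ case, where one has to recognize that the union collapses to a single interval ending at $t$ (otherwise a naive sum $\sum_{i=1}^{n}2R/|\lambda_i|$ would exceed $\bar\tau$), and also the limit cases $p=0$ or $p=n$ where one of the two sums in the definition of $\bar\tau$ is empty and the corresponding boundary region is empty of characteristics ever entering $\omega^c$, so that the max is realized by the non-trivial sum.
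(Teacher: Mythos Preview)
Your proposal is correct and follows the same route the paper takes: the Corollary is stated there without a standalone proof, as an immediate consequence of Proposition~\ref{prop:ConstructSol} and the uniform bound $\tau_i\leq 2R/|\lambda_i|$ already recorded in Section~\ref{ssec:characteristics-def} (cf.\ \eqref{UnionIntervalDelay}--\eqref{supI}). Your handling of the case $x\in[-R,R]$, where you observe that the union collapses to $[\min_i t_{i,\mathrm{en}},t]$ and hence is bounded by $2R/\min_i|\lambda_i|\leq \bar\tau$, is in fact more explicit than anything the paper writes down for that case.
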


With this, we are ready for the proof of Theorem \ref{MainThm}.

\begin{proof}[Proof of Theorem \ref{MainThm}]
Using the decomposition from Proposition \ref{prop:ConstructSol}, Corollary \ref{CorI}, and performing similar computations as in the case $n=2$, we can infer that
 \begin{align}\label{FinalVn}
     \|V^h(t)\|_{L^2(\R)}\leq e^{-\gamma(t-\sup_{x\in\R}|\cI(x,t)|)}\,\|V_0\|_{L^2(\R)}.
 \end{align}

Thus, using the bound \eqref{BoundI}, we have that for $t\geq \bar{\tau}$, 
\begin{align*} \|V^h(t)\|_{L^2(\R)}\leq e^{-\gamma (t-\bar{\tau})}\|V_0\|_{L^2(\R)}.
\end{align*} 
\end{proof}

\subsection{Proof of Theorem \ref{MainThm2}}
\label{ssec:thm2}

Let us study the behavior of the solution on the time-interval $[0,\bar{\tau}]$, which appears to highly depend on algebraic relations between the eigenvalues. First, we show that until a certain time the solution does not experience any decay.
\begin{lemma}\label{LemmaCons}
There exists a time $\tau^*$ such that for $t\in[0,\tau^*]$,
    \begin{align*}
&\Vert V(t, \cdot) \Vert_{L^2(\R)} \leq \Vert V_0 \Vert_{L^2(\R)}. \\
\end{align*}
\end{lemma}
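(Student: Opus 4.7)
The plan is to derive the inequality directly from an $L^2$ energy identity for the diagonalized system \eqref{eq:V}, so that the conclusion holds for every $t \ge 0$ and a fortiori on $[0,\tau^*]$. The role of $\tau^*$ in the statement is not to make the upper bound true (it holds trivially for all times) but to quantify \emph{how far} from an equality the inequality can possibly be, which is the companion statement needed to complete Theorem \ref{MainThm2}.

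First I would take the scalar product of \eqref{eq:V} with $V$ and integrate in $x \in \R$. Since $D = P^{-1}AP$ is diagonal, hence symmetric, the transport contribution vanishes after an integration by parts:
\begin{equation*}
\int_\R V \cdot D\partial_x V \, dx = \frac{1}{2}\int_\R \partial_x(V \cdot D V)\, dx = 0.
\end{equation*}
The remaining term is handled exactly as in the template identity \eqref{eq:ZZ}--\eqref{eq:LZ} of the introduction: the block structure \eqref{eq:damping} and the positivity \eqref{ass:D} encode dissipation, and the orthogonality of $P$ (inherited from the symmetry of $A$) preserves this sign, so that
\begin{equation*}
\frac{1}{2}\frac{d}{dt}\|V(t)\|_{L^2(\R)}^2 \le 0.
\end{equation*}
Integrating in $t$ yields $\|V(t)\|_{L^2(\R)} \le \|V_0\|_{L^2(\R)}$ for every $t \ge 0$, which is the content of the lemma.

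The genuinely non-trivial step, which is needed for the remainder of Theorem \ref{MainThm2} rather than for Lemma \ref{LemmaCons} itself, is to pin down $\tau^*$ as the largest time $t$ for which there exists $x_0 \in \R$ such that the union of intervals $\mathcal{I}(x_0,t)$ from \eqref{UnionIntervalDelay} (or its analogue on $\{x \le -R\}$) covers the whole interval $[0,t]$. For such a $(x_0,t)$ the representation in Proposition \ref{prop:ConstructSol} writes $V(x_0,t)$ entirely through conservative propagators along at least one family of characteristics meeting $(x_0,t)$, so no damping has yet acted at that point and the energy inequality is locally saturated. The main obstacle lies precisely here: characterising this maximal ``gap-free'' time in terms of the eigenvalues $\lambda_1,\dots,\lambda_n$ amounts to a combinatorial problem about when the characteristic intervals $[t_{i,en},t_{i,ex}]$ can be made to tile $[0,t]$ without overlap, which is what eventually produces the proportionality condition \eqref{TauStarTauBar} for the equality $\tau^* = \bar{\tau}$.
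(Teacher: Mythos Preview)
Your energy-identity argument is correct and does establish the inequality $\|V(t)\|_{L^2}\le\|V_0\|_{L^2}$ for every $t\ge0$; as you note, this makes the literal statement of the lemma almost tautological. You are also right that the substantive content lies in identifying $\tau^*$ as a time up to which no global decay can occur, and your description of $\tau^*$ as the largest $t$ for which some $\mathcal I(x_0,t)$ covers $[0,t]$ captures the intended meaning.

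The paper, however, proceeds differently and with a different emphasis. Rather than deriving the upper bound from \eqref{eq:ZZ}--\eqref{eq:LZ}, it treats the lemma as a construction: it exhibits a concrete lower bound $\tau^*\ge\tau_p+\tau_{p-1}$ by working along the characteristic $\mathcal D=X_p(\cdot,R,2R/|\lambda_p|)$ and locating the unique point $(x_{p-1},t_{p-1})\in\mathcal D$ at which $t_{p,ex}=t_{p-1,en}$, so that the two slowest characteristics occupy $\omega^c$ consecutively without gap or overlap. This explicit geometric construction is what feeds directly into Lemma~\ref{lm:th2-2} and into the three-component analysis of Section~\ref{sec:3comp}; your energy argument, while cleaner for the bare inequality, does not by itself produce this quantitative lower bound on $\tau^*$. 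In short: your route proves the stated inequality more economically, but the paper's route is the one that actually manufactures the object $\tau^*$ used in the rest of Theorem~\ref{MainThm2}.
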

\begin{proof}
Let us again focus on the case $x\geq R$.
Proving this lemma amounts in saying that there exists a time $\tau^*$ such that the conservative semigroups $S_{c,i}$ are continuously active from the time $t=0$ to the time $\tau^*.$  Define $\cD$ the characteristic $X_p$ crossing $\left(R,\tfrac{2R}{\lambda_p}\right)$. The points on $\cD$ are the points for which the conservative semigroup $S_{c,p}$ has been active on the whole time-interval $[0,\tau_p]$. Thus, one could chose $\tau_p$ as $\tau^*$ and the lemma is proved.
But one can actually increase this value without additional conditions. Indeed, it is possible to find a point $(x_{p-1},t_{p-1})\in\mathcal{D}$ such that $t_{ex,p}(x_{p-1},t_{p-1})=t_{en,p-1}(x_{p-1},t_{p-1})$; namely, $$x_{p-1}=\dfrac{R(|\lambda_{p}|+|\lambda_{p-1}|)}{|\lambda_{p-1}|-|\lambda_{p}|} \quad \text{and} \quad t_{p-1}=2R\dfrac{|\lambda_p||\lambda_{p-1}|}{|\lambda_{p-1}|-|\lambda_{p}|}.$$
This point is the only point for which the characteristics $X_p$ and $X_{p-1}$ are inside $\omega^c$ in a continuous manner and without overlapping. Therefore, the value of $\tau^*$ can be updated to, at least, $\tau_p+\tau_{p-1}$.
\end{proof}

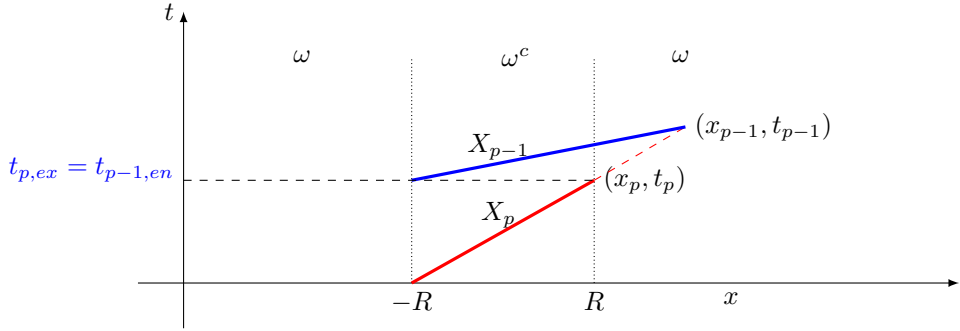
\begin{figure}[H]{
\begin{tikzpicture}[xscale=0.6,yscale=0.6]
\draw [-latex] (-8,0) -- (10,0) ;
\draw  (5,0)  node [below]  {$x$} ; 

\draw [-latex] (-7,-1) -- (-7,6) ;
\draw  (-7,6)  node [left]  {$t$} ; 

\draw [densely dotted] (-2,0) -- (-2,5) ; 

\draw [densely dotted] (2,0) -- (2,5) ; 

\draw  (-4,5)  node [left]  {$\omega$} ;

\draw  (0.8,5)  node [left]  {$\omega^c$} ;

\draw  (4.3,5)  node [left]  {$\omega$} ;

\draw  (-2,0)  node [below]  {$-R$} ;
\draw  (2,0)  node [below]  {$R$} ;

\draw  (2,2.27)  node [right]  {$(x_p,t_p)$} ;

\draw [red, very thick] (-2,0) -- (2,2.27) ; 
\draw  (-0.1,2)  node [below]  {$X_{p}$}; 
\draw [red, dashed] (2,2.27) -- (4,3.45);
\draw  (4,3.45)  node [right]  {$(x_{p-1},t_{p-1})$} ;

\draw [blue, very thick] (-2,2.27) -- (4,3.45) ; 
\draw  (-0.1,3.5)  node [below]  {$X_{{p-1}}$}; 

\draw [dashed] (-7,2.27) -- (2,2.27);
 \draw  (-7,2.5)  node [blue][left]  {$t_{p,ex}=t_{{p-1},en}$} ;

\end{tikzpicture}  
}
\caption{Construction of the point $(x_{p-1},t_{p-1})$.}
\label{fig:proofLemmaStar}
\end{figure}
\begin{remark} A similar construction could be done for any couple of characteristics $(X_q,X_k)$ but the largest value is obtained for the couple $(X_p,X_{p-1})$ because these are the slowest characteristics i.e. $\tau_p>\tau_{p-1}>\ldots>\tau_1$.
\end{remark}

One could think of continuing the procedure described above and looking for a point $(x_{p-2},t_{p-2})$ such that $t_{ex,p-1}(x_{p-2},t_{p-2})=t_{en,p-2}(x_{p-2},t_{p-2}).$ But, except if some specific conditions are satisfied by the eigenvalues, one cannot be sure if such a point creates some time-interval where the dissipation is active or if the characteristics overlap in $\omega^c$ and therefore the maximum delay is not attained.

Due to this phenomenon, that we are not able to obtain the largest $\tau^*$ possible satisfying such properties in the general case. This is related to the issue encountered by the authors in \cite{CoronNguyen} where they show counterexamples to the time-optimal null-controllability when considering more than $2$ components. The computation of the constant $\tau^*$ in the case of $3$ components associated to negative eigenvalues is the purpose of Section \ref{sec:3comp}.

The following lemma concludes the proof of Theorem \ref{MainThm2}.
\begin{lemma}\label{lm:th2-2}
The following equivalence holds $$\tau^*=\bar{\tau} \Leftrightarrow\dfrac{|\lambda_i|}{|\lambda_{i+1}|}=\dfrac{|\lambda_{i+1}|}{|\lambda_{i+2}|} \quad\forall  i\in[1,p-2] \text{ or } \forall i\in[p+1,n-2]. $$
\end{lemma}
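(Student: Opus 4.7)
The strategy is to push the construction in the proof of Lemma~\ref{LemmaCons} (which produced a point where the conservative intervals of two consecutive characteristics meet endpoint-to-endpoint) to \emph{all} characteristics contributing to $\bar\tau$, and to identify the algebraic obstruction that produces the geometric-progression condition.

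Without loss of generality, assume the maximum in the definition of $\bar\tau$ is attained by the sum over $i\in[p+1,n]$, so that $\bar\tau=\sum_{i=p+1}^{n} 2R/|\lambda_i|$; we then work in the half-space $x\geq R$. The other case ($i\in[1,p]$ and $x\leq -R$) is completely symmetric.

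\textbf{Step 1 (tiling reformulation).} Combining Corollary~\ref{CorI} with the construction in the proof of Lemma~\ref{LemmaCons}, $\tau^*=\bar\tau$ is equivalent to the existence of a point $(x_\star,t_\star)$ with $x_\star\geq R$ such that the intervals
\[
I_i:=\bigl[t_{i,en}(x_\star,t_\star),\, t_{i,ex}(x_\star,t_\star)\bigr],\qquad i=p+1,\dots,n,
\]
(i) each attain their maximal length $2R/|\lambda_i|$ (the characteristic crosses $\omega^c$ entirely) and (ii) together tile $[0,\bar\tau]$ with no overlap and no gap. Indeed, the upper bound $|\mathcal{I}(x,t)|\leq\bar\tau$ of Corollary~\ref{CorI} is saturated exactly under these two conditions.

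\textbf{Step 2 (matching equations).} When every characteristic fully crosses $\omega^c$, the endpoints read
\[
t_{i,en}(x_\star,t_\star) = t_\star-\frac{x_\star+R}{|\lambda_i|},\qquad t_{i,ex}(x_\star,t_\star) = t_\star-\frac{x_\star-R}{|\lambda_i|}.
\]
Ordering the characteristics from the slowest ($|\lambda_{p+1}|$, earliest interval) to the fastest ($|\lambda_n|$, latest interval), the no-overlap/no-gap condition $t_{i,ex}(x_\star,t_\star)=t_{i+1,en}(x_\star,t_\star)$ for $i=p+1,\dots,n-1$ reduces to
\[
\frac{x_\star-R}{|\lambda_i|}=\frac{x_\star+R}{|\lambda_{i+1}|},\qquad\text{i.e.,}\qquad x_\star = R\,\frac{|\lambda_{i+1}|+|\lambda_i|}{|\lambda_{i+1}|-|\lambda_i|}.
\]

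\textbf{Step 3 (compatibility $\Leftrightarrow$ geometric progression).} All $n-p-1$ of these expressions must yield the same $x_\star$. Equating the formulas for consecutive indices $i$ and $i+1$ and cross-multiplying gives, after a short algebraic manipulation,
\[
|\lambda_{i+1}|^2 = |\lambda_i|\cdot|\lambda_{i+2}|,\qquad i=p+1,\dots,n-2,
\]
which is precisely the geometric-progression condition in the statement. This proves $(\Rightarrow)$. Conversely, if this condition holds, the common value of $x_\star$ is well defined and exceeds $R$ (since $|\lambda_{i+1}|>|\lambda_i|$ gives $x_\star=R(|\lambda_{i+1}|+|\lambda_i|)/(|\lambda_{i+1}|-|\lambda_i|)>R$), and choosing $t_\star=(x_\star+R)/|\lambda_{p+1}|$ so that $t_{p+1,en}(x_\star,t_\star)=0$ makes the intervals tile $[0,\bar\tau]$ exactly, realizing $\tau^*=\bar\tau$.

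The main obstacle I foresee is the algebraic collapse in Step~3, in which $n-p-1$ linear constraints on the single unknown $x_\star$ must be reduced to the quadratic relation $|\lambda_{i+1}|^2=|\lambda_i||\lambda_{i+2}|$; carrying out this simplification cleanly and checking the geometric admissibility ($x_\star>R$ and $t_\star>0$) of the candidate point is the technical core of the argument.
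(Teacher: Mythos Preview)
Your argument is correct and follows essentially the same route as the paper: both reduce $\tau^*=\bar\tau$ to the endpoint-matching conditions $t_{i,ex}=t_{i+1,en}$, observe that each such condition determines the same ratio $(x_\star+R)/(x_\star-R)=|\lambda_{i+1}|/|\lambda_i|$, and conclude that these ratios must all coincide, which is the geometric-progression condition. The only stylistic difference is that the paper fixes in advance the point $(x_{p-1},t_{p-1})$ coming from the two slowest characteristics and then checks the remaining matchings there, whereas you treat all matching constraints symmetrically and deduce a common $x_\star$; the algebra is identical.

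One small bookkeeping slip: according to Corollary~\ref{CorI}, the region $x\geq R$ is associated with the union over $i\in[1,p]$, not $i\in[p+1,n]$ (the paper itself is not fully consistent on this labeling, but that is its stated convention). This does not affect your computation, since you work with $|\lambda_i|$ throughout and the two cases are, as you note, symmetric; just make sure the index range matches the half-space you choose.
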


\begin{proof}
The left-hand side condition says that the energy of the solution is conserved for the maximum time possible in a continuous manner from the time $t=0$ to the time $\bar{\tau}$.
It is clear that this can only happen for the points that are on the line $\cD$, otherwise the time spent inside $\omega^c$ by the $p$-th characteristic on the interval $[0,\tau_p]$ would be strictly smaller than $\tau_p$. Moreover, the point we should look at on $\cD$ is $(x_{p-1},t_{p-1})$ as it is the only such that $t_{ex,p}(x_{p-1},t_{p-1})=t_{en,p-1}(x_{p-1},t_{p-1}).$
Therefore, to conclude the proof of the lemma, one needs to verify under which conditions the point $(x_{p-1},t_{p-1})$ constructed previously satisfies $t_{i,ex}(x_{p-1},t_{p-1})=t_{i-1,en}(x_{p-1},t_{p-1})$ for all $i\in[2,p]$. This can be easily solved explicitly:
 \begin{align*} &t_{i,ex}(x_{p-1},t_{p-1})=t_{i-1,en}(x_{p-1},t_{p-1}) \quad \forall \;i\in[2,p]\\&\Leftrightarrow t_{p-1}-\dfrac{x_{p-1}-R}{|\lambda_{i}|}=t_{p-1}-\dfrac{x_{p-1}+R}{|\lambda_{i-1}|} \quad \forall \;i\in[2,p]\\&\Leftrightarrow
\dfrac{|\lambda_{i-1}|}{|\lambda_{i}|}=\dfrac{x_{p-1}+R}{x_{p-1}-R} \quad \forall \;i\in[2,p]
\\& \Leftrightarrow\dfrac{|\lambda_{i-1}|}{|\lambda_{i}|}=\dfrac{|\lambda_{p-1}|}{|\lambda_p|}  \quad \forall \;i\in[2,p].
\end{align*}
\end{proof}

\begin{proof}[Proof of Theorem \ref{MainThm2}]
The proof of the two claims follows from Lemmas \ref{LemmaCons} and \ref{lm:th2-2}.
\end{proof}

\section{Optimal result in the case of 3 negative eigenvalues}
\label{sec:3comp}

In this section, we assume that the matrix $A$ has three negative eigenvalues such that $\lambda_1<\lambda_2<\lambda_3<0$ ($n=p=3$). As in \eqref{FinalVn}, we have
 \begin{align}\label{FinalV3}
     \|V(t)\|_{L^2(\R)}\leq e^{-\gamma(t-\sup_{x\in\R}|\cI(x,t)|)}\,\|V_0\|_{L^2(\R)}.
 \end{align}
and the gross bound
\begin{equation}\label{grossbound}\sup_{x\geq R}|\cI|\leq \sum_{i=1}^3\dfrac{2R}{|\lambda_i|}=\bar{\tau},
\end{equation} 
which lead to a satisfying result concerning the large-time asymptotic behavior of the solution.
However, we now turn to a more detailed analysis of the solution for $t < \bar{\tau}$.

In the following, $\cD$ denotes the characteristic $X_3(\cdot,R,\dfrac{2R}{\lambda_3})$.
As in the proof of Lemma \ref{LemmaCons},  there exists a unique point $(x_2,t_2)\in \cD$ such that $ t_{2,en}(x_2,t_2)= t_{3,ex}(x_2,t_2)$ and we have
$$x_2=\dfrac{R(|\lambda_2|+|\lambda_3|)}{|\lambda_2|-|\lambda_3|} \quad \text{and} \quad t_2=2R\dfrac{|\lambda_2||\lambda_3|}{|\lambda_2|-|\lambda_3|}.$$

If we continue the procedure from Lemma \ref{LemmaCons} to find a point $(x_1,t_1)\in \cD$ such that $$t_{2,ex}(x_1,t_1)=t_{1,en}(x_1,t_1),$$ then such a point would not necessarily coincide with $(x_2,t_2)$ and thus verify 
$$t_{3,ex}(x_1,t_1)\ne t_{2,en}(x_1,t_1).$$

For $(x_2,t_2)$, the characteristics $X_2$ and $X_3$ are continuously inside $\omega^c$ for the time-length $\dfrac{2R}{|\lambda_3|}+\dfrac{2R}{|\lambda_2|}$. Concerning $X_1$, we have the two following scenarios:
\begin{enumerate}
 \item There is an \textit{overlap} between $X_2$ and $X_1$ inside $\omega^c$ -- i.e. $t_{2,ex}(x_2,t_2)-t_{1,en}(x_2,t_2)\leq 0$ -- and thus
 \begin{align*}\tau(x_2,t_2)&\leq \dfrac{2R}{|\lambda_3|}+\dfrac{2R}{|\lambda_2|}+\dfrac{2R}{|\lambda_1|}-\max(0,t_{2,ex}(x_2,t_2)-t_{1,en}(x_2,t_2))
\\&\leq \dfrac{2R}{|\lambda_3|}+\dfrac{2R}{|\lambda_2|}+\dfrac{2R}{|\lambda_1|}-\max\left(0,2R\dfrac{|\lambda_2|^2-|\lambda_1||\lambda_3|}{|\lambda_2||\lambda_1|(|\lambda_2|-|\lambda_3|)}\right).
\end{align*}
In this case, the three characteristics are continuously in $\omega^c$ but they are only inside $\omega^c$ for the maximum time $\bar{\tau}$ if $$\dfrac{\lambda_1}{\lambda_2}=\dfrac{\lambda_2}{\lambda_3}.$$
    \item There is a \textit{gap} between $X_2$ and $X_1$ -- i.e. $t_{2,ex}(x_2,t_2)-t_{1,en}(x_2,t_2)>0$ -- and thus $$|\cI(x_2,t_2)|=\frac{2R}{|\lambda_3|}+\frac{2R}{|\lambda_2|}+\frac{2R}{|\lambda_1|}. $$
   In this case the maximum time-length possible spent in $\omega^c$ is attained but in the interval $[t_{2,ex}(x_2,t_2),t_{1,ex}(x_2,t_2)]$ there are no characteristics inside $\omega^c$ and thus dissipation occurs.
\end{enumerate}

Let us turn to a detailed analysis of case (1): $t_{2,ex}(x_2,t_2)-t_{1,en}(x_2,t_2)<0$ and considering  $(x_1,t_1)\in\cD$ as the point verifying $t_{1,en}(x_1,t_1)= t_{2,ex}(x_1,t_1)$. Then the following result holds. 

\begin{lemma}\label{x1t1}
The point $(x_1,t_1)=\left(\dfrac{R(|\lambda_1|+|\lambda_2|)}{|\lambda_1|-|\lambda_2|},\dfrac{2R|\lambda_1||\lambda_3|}{|\lambda_1|-|\lambda_2|}\right)$ is such that
$$t_1=\argmin_{t\in\R^+}\{\sup_{x\in \R}\cI(x,t)\}=\bar{\tau}.$$
\end{lemma}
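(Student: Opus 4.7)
The plan is to establish two assertions: (a) $|\cI(x_1,t_1)|=\bar\tau$, and (b) for every $t\in(0,t_1)$ and every $x\in\R$, $|\cI(x,t)|<\bar\tau$. Together these identify $t_1$ as the earliest time at which $\sup_{x\in\R}|\cI(x,t)|$ reaches its a priori upper bound $\bar\tau$ from \eqref{grossbound}, and confirm that the value of the supremum at $t_1$ equals $\bar\tau$ — which is the content of the lemma.

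For (a) I would exploit the two defining properties of $(x_1,t_1)$ in tandem. First, the membership $(x_1,t_1)\in\cD$ forces the characteristic $X_3$ through $(x_1,t_1)$ to occupy $\omega^c$ for its full maximal duration $2R/|\lambda_3|$, with the $\omega^c$-occupancy interval coinciding with $[0,2R/|\lambda_3|]$. Second, the relation $t_{1,en}(x_1,t_1)=t_{2,ex}(x_1,t_1)$ makes the $\omega^c$-occupancy intervals of $X_1$ and $X_2$ adjoin without gap or overlap. It then remains to verify the analogous adjacency between $X_3$ and $X_2$; after substituting the explicit coordinates of $(x_1,t_1)$ into the crossing-time formulas of Section \ref{ssec:characteristics-def}, this reduces to an elementary algebraic identity in $|\lambda_1|,|\lambda_2|,|\lambda_3|$. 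Once these three adjacencies are confirmed, the three intervals — each of maximal length $2R/|\lambda_i|$ — tile a subinterval of $[0,t_1]$ back-to-back, so their total length is exactly $\sum_{i=1}^{3}2R/|\lambda_i|=\bar\tau$.

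For (b) the argument is structural. To saturate $|\cI(x,t)|\le\bar\tau$, each characteristic $X_i$ through $(x,t)$ must occupy $\omega^c$ for exactly its maximum $2R/|\lambda_i|$ within $[0,t]$ and the three occupancy intervals must be pairwise disjoint, since any overlap strictly reduces $|\cI|$ below the sum of individual lengths. Full occupancy of the slowest characteristic $X_3$ restricts $(x,t)$ to the half-line $\cD$ (extended to $t\ge 2R/|\lambda_3|$); among points of $\cD$, the no-overlap condition between the $X_1$- and $X_2$-intervals pins down $(x,t)=(x_1,t_1)$ uniquely; and, as shown in (a), the no-overlap between the $X_2$- and $X_3$-intervals then holds automatically. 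Consequently no $(x,t)$ with $t<t_1$ can realize $|\cI(x,t)|=\bar\tau$.

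The main obstacle lies in step (b): one must exclude "mixed" configurations in which some intervals overlap while others have gaps, yet the total could conceivably approach $\bar\tau$. This is handled by the monotonicity observation that any overlap strictly shortens $|\cI|$ below the sum of individual interval lengths, so the only way to attain $\bar\tau$ is perfect back-to-back tiling with maximal occupancies — a scenario, by the preceding paragraph, realized uniquely at $(x_1,t_1)$.
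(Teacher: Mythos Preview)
The paper states this lemma without proof, so there is no argument to compare against directly. Your overall plan---reducing $|\cI(x,t)|=\bar\tau$ to the twin requirements that each interval $[t_{i,en},t_{i,ex}]$ attain its maximal length $2R/|\lambda_i|$ and that the three intervals be pairwise disjoint---is correct, and your final paragraph handles mixed configurations properly. Two concrete steps, however, are wrong.

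In part (a), the adjacency $t_{2,en}(x_1,t_1)=t_{3,ex}(x_1,t_1)$ you claim between the $X_3$- and $X_2$-intervals is false. That relation characterises $(x_2,t_2)$, not $(x_1,t_1)$; in the overlap case treated here (equivalently $|\lambda_2|^2>|\lambda_1||\lambda_3|$) one has $x_1>x_2$, and moving further along $\cD$ from $(x_2,t_2)$ to $(x_1,t_1)$ turns the $X_3$--$X_2$ adjacency into a genuine \emph{gap}. The ``elementary algebraic identity'' you announce will therefore fail upon substitution. The conclusion $|\cI(x_1,t_1)|=\bar\tau$ nonetheless survives, since a gap still yields disjointness---but your route to it does not.

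In part (b), the assertion that full $X_3$-occupancy forces $(x,t)\in\cD$ is incorrect: it only forces $(x,t)$ into the closed half-plane on or above $\cD$, so your uniqueness chain breaks. The fix is to observe that the two disjointness constraints $t_{3,ex}\le t_{2,en}$ and $t_{2,ex}\le t_{1,en}$ are conditions on $x$ alone (the $t$'s cancel), equivalent respectively to $x\ge x_2$ and $x\ge x_1$. Since $x_1>x_2$ in the present case, the binding constraint is $x\ge x_1$. The $X_3$ full-occupancy condition then reads $t\ge(x+R)/|\lambda_3|$, which is minimised over $x\ge x_1$ at $x=x_1$, giving $t\ge t_1$ with equality precisely at $(x_1,t_1)\in\cD$. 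This establishes (b).
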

From this, it is clear that $t_1>\bar{\tau}$ and therefore using the bound \eqref{grossbound} on the time-interval $[0,t_1]$ is not sharp.
To conclude, we must now look at what happens between the points $(x_2,t_2)$ and $(x_1,t_1)$ more precisely.
\begin{lemma}
Let $(x,t)\in ((x_2,t_2),(x_1,t_1))$.
\begin{itemize}
    \item On the interval $[0,\tau_3]$, the characteristic $X_3(\cdot,x,t)$ is in $\omega^c$.
    \item On the interval $[\tau_3,t_{2,en}(x,t)]$ there are no characteristics crossing $(x,t)$ in $\omega^c$.
    \item The characteristics $X_2(\cdot,x,t)$ and $X_1(\cdot,x,t)$ stay inside $\omega^c$ (they overlap) on the time-interval $[t_{2,en}(x_1,t_1),t_{1,ex}(x_1,t_1)]$ for a time-length of $t_{1,ex}(x_1,t_1)-t_{2,en}(x_1,t_1)$.
     \item On the interval $[t_{1,ex}(x_1,t_1),+\infty[$, there are not any characteristics passing through $(x,t)$ in $\omega^c$.
\end{itemize}
\end{lemma}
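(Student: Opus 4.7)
The plan is a direct geometric verification based on the explicit form of $\cD$. Since $(x,t)\in\cD$, the $X_3$-characteristic through $(x, t)$ is $\cD$ itself, and by construction $\cD$ enters $\omega^c$ at $(R, 0)$ and exits at $(-R, \tau_3)$; therefore $X_3(\cdot, x, t)$ lies in $\omega^c$ exactly on $[0, \tau_3]$, which settles the first bullet.

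For the remaining bullets I would parametrize the portion of $\cD$ lying in the left damped region as $x = R - |\lambda_3| t$ (valid for $t > \tau_3$), so that $R - x = |\lambda_3| t$ and $|x|-R = |\lambda_3| t - 2R$. Substituting into the formulas of Section~\ref{ssec:characteristics-def} and carefully handling the sign $\lambda_i < 0$ yields, for $i = 1, 2$,
\begin{equation*}
t_{i,ex}(x, t) = t\,\frac{|\lambda_i|-|\lambda_3|}{|\lambda_i|}, \qquad t_{i,en}(x, t) = t_{i,ex}(x, t) + \frac{2R}{|\lambda_i|},
\end{equation*}
so that $X_i(\cdot, x, t)$ sits inside $\omega^c$ precisely on $[t_{i,ex}(x, t), t_{i,en}(x, t)]$.

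The heart of the argument is then the chain of strict inequalities
\begin{equation*}
\tau_3 < t_{2,ex}(x, t) < t_{1,ex}(x, t) < t_{2,en}(x, t) < t_{1,en}(x, t)
\end{equation*}
for $(x, t)$ on the open segment between $(x_2, t_2)$ and $(x_1, t_1)$. The first uses $t > t_2$ together with the defining relation $t_{2,ex}(x_2, t_2) = \tau_3$ and the strict monotonicity of $t \mapsto t_{2,ex}(x, t)$. The second combines $|\lambda_1| > |\lambda_2|$ with the monotonicity of $\mu \mapsto 1 - |\lambda_3|/\mu$. The third (the delicate step) follows from the defining relation $t_{2,en}(x_1, t_1) = t_{1,ex}(x_1, t_1)$ at the endpoint $(x_1, t_1)$ together with the explicit computation
\begin{equation*}
t_{2,en}(x, t) - t_{1,ex}(x, t) = \frac{2R}{|\lambda_2|} - t\,\frac{|\lambda_3|(|\lambda_1| - |\lambda_2|)}{|\lambda_1| |\lambda_2|},
\end{equation*}
which is strictly decreasing in $t$ and vanishes exactly at $t = t_1$. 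The fourth follows from $t_{1,en}(x, t) - t_{2,en}(x, t) = (|\lambda_1| - |\lambda_2|)(t|\lambda_3| - 2R)/(|\lambda_1| |\lambda_2|) > 0$.

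Given this ordering, the three remaining bullets are immediate. On the gap $[\tau_3, t_{2,ex}(x, t)]$ the characteristic $X_3$ has already exited $\omega^c$ while $X_1, X_2$ have not yet entered, so no characteristic through $(x, t)$ lies in $\omega^c$. The intervals $[t_{2,ex}, t_{2,en}]$ and $[t_{1,ex}, t_{1,en}]$ overlap on $[t_{1,ex}(x, t), t_{2,en}(x, t)]$, a window of positive length $t_{2,en}(x, t) - t_{1,ex}(x, t)$ on which both $X_1$ and $X_2$ sit simultaneously inside $\omega^c$. Beyond $t_{1,en}(x, t)$, each of $X_1, X_2, X_3$ has left $\omega^c$ for good. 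The only technical subtlety throughout is the sign bookkeeping for $\lambda_i < 0$ in the formulas of Section~\ref{ssec:characteristics-def}.
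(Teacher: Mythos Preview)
The paper states this lemma without proof; it is presented as an immediate geometric observation that feeds into Theorem~\ref{Thm3comp}. Your strategy---parametrizing the segment of $\cD$ beyond $\omega^c$, computing the entry and exit times of $X_1,X_2$ explicitly along it, and establishing the ordering of these times via monotonicity in $t$---is exactly the intended argument, and your chain of inequalities is correct.

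Two points of bookkeeping deserve attention. First, you have taken $\lambda_i<0$ literally and parametrized $\cD$ as the characteristic through $(R,0)$ moving into the left half-line; the paper's figures and the line $x=\lambda_3 t-R$ in the remark after Theorem~\ref{Thm3comp} instead place $\cD$ through $(-R,0)$ moving into the right half-line (as though $\lambda_i>0$). This is a mirror image and, as you note, it swaps the roles of $t_{i,\mathrm{en}}$ and $t_{i,\mathrm{ex}}$ relative to the lemma's labelling. The geometric content survives the reflection, but when you write your gap as $[\tau_3,t_{2,\mathrm{ex}}]$ while the lemma writes $[\tau_3,t_{2,\mathrm{en}}]$, you should say explicitly that these denote the same time under the swap.

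Second, in the third bullet you identify the \emph{intersection} $[t_{1,\mathrm{ex}},t_{2,\mathrm{en}}]$ (your labels) on which $X_1$ and $X_2$ are simultaneously in $\omega^c$. The lemma's interval $[t_{2,\mathrm{en}},t_{1,\mathrm{ex}}]$ (their labels, and with the evident typo $(x_1,t_1)\to(x,t)$) is rather the \emph{union} of the two sojourn intervals, which is connected precisely because of the overlap you established; this union is the object needed downstream in Theorem~\ref{Thm3comp}, since it is the full window on which some $S_{c,i}$ remains active. Your ordering already yields this union as $[t_{2,\mathrm{ex}},t_{1,\mathrm{en}}]$ in your notation, so only one extra sentence is needed to close the gap with the lemma as stated.
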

The following theorem is a direct consequence of our previous analysis and concludes the study of this particular case.
\begin{theorem} \label{Thm3comp}
Let $V$ be the solution of \eqref{eq:vn} with $n=p=3$ associated to the initial data $V_0\in L^1\cap L^2$.
\begin{itemize}

   \item For  $t\in[0,\tau_3+\tau_2+\tau_1-t_\lambda]$, $$\|V(t)\|_{L^2(\R)}\leq\|V_0\|_{L^2(\R)}\quad \text{where} \quad t_\lambda=\max\left(0,2R\dfrac{|\lambda_2|^2-|\lambda_1||\lambda_3|}{|\lambda_2||\lambda_1|(|\lambda_2|-|\lambda_3|)}\right).$$

   \item For $t\in[\tau_3+\tau_2+\tau_1-t_\lambda,t_{1,ex}(t)]$, $$\|V(t)\|_{L^2(\R)}\leq\|V_0\|_{L^2(\R)}e^{-\gamma(t_{2,en}(t)-\tau_3)}. $$
   \item For $t\in[t_{1,ex}(t),t_1]$ $$\|V(t)\|_{L^2(\R)}\leq\|V_0\|_{L^2(\R)}e^{-\gamma(t_1-t_{1,ex}(t))-\gamma(t_{2,en}(t)-\tau_3)}. $$
    \item And for $t\geq t_1$ $$\|V(t)\|_{L^2(\R)}\leq\|V_0\|_{L^2(\R)}e^{-\gamma(t-\tau)}. $$
\end{itemize}
\end{theorem}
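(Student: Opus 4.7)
The strategy is to reduce everything to the master inequality \eqref{FinalV3}, namely
\[
\|V(t)\|_{L^2(\R)}\leq e^{-\gamma(t-\sup_{x\in\R}|\cI(x,t)|)}\|V_0\|_{L^2(\R)},
\]
and then to compute, in each of the four time regimes, the explicit value (or a sharp upper bound) of $\sup_{x\in\R}|\cI(x,t)|$ and subtract it from $t$. All the geometric work is already done in Lemma \ref{x1t1} and in the lemma describing $(x,t)\in((x_2,t_2),(x_1,t_1))$, so the task is essentially a bookkeeping of how long the (at most) three intervals $[t_{i,en}(x,t),t_{i,ex}(x,t)]$ overlap or leave gaps, as $(x,t)$ varies.

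First, I would argue that the supremum defining $\sup_{x}|\cI(x,t)|$ is attained (at each fixed $t$) on the distinguished characteristic $\cD=X_3(\cdot,R,2R/|\lambda_3|)$. This is because $\cD$ is precisely the locus where the slowest characteristic $X_3(\cdot,x,t)$ spends the maximal possible time $\tau_3=2R/|\lambda_3|$ inside $\omega^c$; away from $\cD$ the contribution of $X_3$ to $\cI$ strictly decreases, and the contributions of $X_1,X_2$ cannot compensate. Having localised the optimisation to $\cD$, one reads off the following picture, running $(x,t)$ along $\cD$ as $t$ grows: on $[0,t_2]$ all three characteristics are (possibly overlappingly) inside $\omega^c$, so $|\cI(x,t)|=t-t_\lambda$ with $t_\lambda$ as in the statement, hence $t-\sup_x|\cI|\le t_\lambda$ and no decay is gained, which gives the first item once one checks $t_2=\tau_3+\tau_2+\tau_1-t_\lambda$ by direct computation using the coordinates of $(x_2,t_2)$.

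Second, on the segment $((x_2,t_2),(x_1,t_1))\subset\cD$, the fourth preliminary bullet (the lemma preceding the theorem) gives the structure
\[
|\cI(x,t)|=\tau_3+\bigl(t_{1,ex}(x,t)-t_{2,en}(x,t)\bigr),
\]
with a genuine gap $[\tau_3,t_{2,en}(x,t)]$ on which no characteristic sits in $\omega^c$. Writing $t_{2,en}(t):=t_{2,en}(x,t)|_{\cD}$ and $t_{1,ex}(t):=t_{1,ex}(x,t)|_{\cD}$, substitution into \eqref{FinalV3} yields $t-\sup_x|\cI|=t_{2,en}(t)-\tau_3$ when $t\le t_{1,ex}(t)$, and $t-\sup_x|\cI|=(t_1-t_{1,ex}(t))+(t_{2,en}(t)-\tau_3)$ when $t_{1,ex}(t)\le t\le t_1$, matching the second and third items. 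Finally, once $t\ge t_1$, Lemma \ref{x1t1} gives $\sup_x|\cI(x,t)|=\bar\tau$ (the gross bound is saturated at $(x_1,t_1)$), and one reads off the fourth item from \eqref{FinalV3}.

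The main technical obstacle is the first step: verifying rigorously that $\cD$ is the optimal curve and giving an explicit formula for $\sup_x|\cI(x,t)|$ as a piecewise function of $t$. Concretely, one has to rule out competitors of the form ``another characteristic carrying part of the energy''. This reduces to monotonicity of the maps $x\mapsto t_{i,en}(x,t)$, $x\mapsto t_{i,ex}(x,t)$, which are affine and can be compared explicitly; the arithmetic is routine but must be done carefully to see that overlaps on $\cD$ are worst-case and gaps off $\cD$ shorten $|\cI|$. Once this geometric reduction is accepted, the four inequalities in the theorem follow by plugging the explicit values of $|\cI|$ into \eqref{FinalV3}.
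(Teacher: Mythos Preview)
Your proposal is correct and follows essentially the same route as the paper: the paper states that Theorem \ref{Thm3comp} ``is a direct consequence of our previous analysis'', namely inequality \eqref{FinalV3} together with the geometric description along $\cD$ given in Lemma \ref{x1t1} and the unnumbered lemma preceding the theorem. Your plan simply makes explicit the bookkeeping the paper leaves implicit --- localising the supremum on $\cD$ and reading off $t-\sup_x|\cI(x,t)|$ in each of the four regimes --- so there is no substantive difference in approach.
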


\begin{remark}
Taking the curve made of the point from $$S=\left[\left(\|V_0\|_{L^2(\R)}e^{-\gamma(t_{2,en}(t)-\tau_3)},t_{1,ex}(t)\right), \quad \text{ for }(x,t)\in \mathcal{D}=[(x_2,t_2),(x_1,t_1)] \right]$$ leads the accurate convex envelop pictured in magenta in Figure \ref{3compdelay}.

Note that above the entering and exiting time only depend on $t$ as we are only interested in the points that are on $\cD$ and therefore satisfy $x=\lambda_3t-R$.
\end{remark}

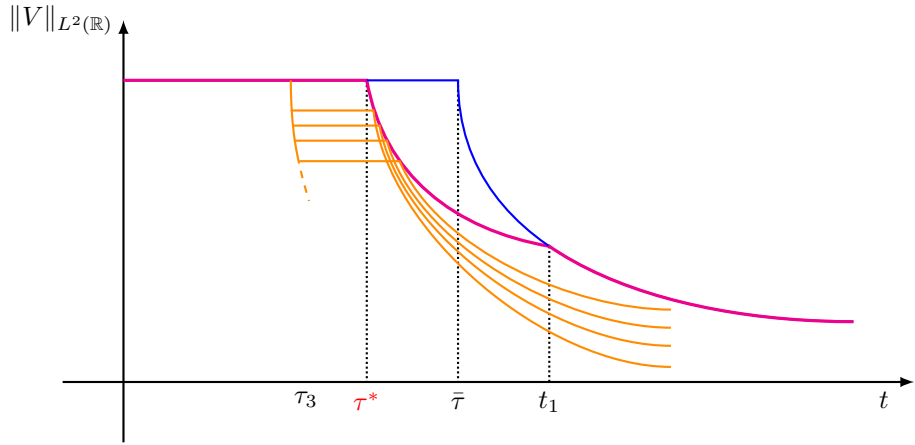
\begin{figure}[H]
\begin{tikzpicture}[xscale=0.8,yscale=0.8, thick]
\draw [-latex] (-1,0) -- (13,0) ;
\draw   (3,0) node [below] {$\tau_3$} (4,0) node [below, red] {$\tau^*$} (5.5,0) node [below] {$\bar{\tau}$} (7,0) node [below] {$t_1$} (12.5,0) node [below] {$t$} ; 

\draw [-latex] (0,-1) -- (0,6) ;
\draw  (0,6)  node [left]  {$\|V\|_{L^2(\R)}$} ; 


\draw [densely dotted] (5.5,5) -- (5.5,0) ; 

\draw [densely dotted] (4,5) -- (4,0) ; 

\draw [magenta, very thick] (0,5) -- (4,5) to [out = 280, in = 170] (7,2.25);
\draw [densely dotted] (7,2.25) -- (7,0) ; 

\draw [blue, postaction={draw=magenta,very thick, dash pattern=on 0mm off 38mm on 80mm}] (4,5) -- (5.5,5) to [out = 270, in = 180]  (12,1);
\draw [orange, dashed, dash pattern=on 11.1mm off 1mm on 1mm off 1mm on 1mm off 1mm on 1mm] (2.75,5) to [bend right = 8] +(.3,-2);
\draw [orange] (2.75,4.5) -- (4.11,4.5) to [out = 276, in = 180, looseness = .8]  (9,.25);
\draw [orange] (2.77,4.25) -- (4.21,4.25) to [out = 280, in = 180, looseness = .8]  (9,.6);
\draw [orange] (2.8,4) -- (4.33,4) to [out = 285, in = 180, looseness = .8]  (9,.9);
\draw [orange] (2.89,3.66) -- (4.54,3.66) to [out = 293, in = 180, looseness = .8]  (9,1.2);

\end{tikzpicture}
\caption{Time-decay for 3 components.}  \label{3compdelay}
\end{figure}

\section{Extensions and open problems} 
\label{sec:open}

\subsection{More general undamped domain}
\label{ssec:general-omega}

Results similar to ours can be obtained whenever $\omega^c$ is a domain of finite measure. For example, let us consider $\omega^c$ as a finite union of bounded stripes; in this case, we can directly retrieve similar decay estimates with a delay depending on the time spent by each characteristics in each stripes. However, one must be careful when trying to recover an optimal result for small times in this case as there might be much more overlapping between each characteristics. For a sufficiently large time, we can recover the Shizuta-Kawashima decay rate with a delay equal to the sum of the time each characteristic spend in each stripes. We have the following theorem.
\begin{theorem}\label{ThmStripe}
Let $\omega^c=\bigcup_{j=1}^m[r_j,r_{j+1}]$. Assume that the matrix $A$ is symmetric, satisfies \eqref{ass:eigR} and that the couple $(A,B)$ satisfies the (SK) condition.
Let $U$ be the solution of \eqref{eq:LS} associated with the initial data $U_0\in L^1(\mathbb{R})\cap L^2(\mathbb{R})$.

There exists a finite time $\bar{\tau}>0$ such that for $t\geq\bar{\tau}$,
\begin{align*}
& \Vert U^h(\cdot,t) \Vert_{L^2(\R)} \le  e^{-\gamma (t-\bar{\tau})} \Vert U_0 \Vert_{L^2(\R)}, \\
&  \Vert U^\ell(\cdot,t) \Vert_{L^\infty(\R)} \le C (t-\bar{\tau})^{-1/2} \Vert U_0 \Vert_{L^1(\R)}
\end{align*}
\end{theorem}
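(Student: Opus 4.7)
The plan is to mimic the structure of the proof of Theorem \ref{MainThm}, replacing the single pair of entry/exit times per characteristic by a finite family of such pairs, one per stripe. As in Section \ref{sec:prelim}, I would begin by diagonalizing $A = P D P^{-1}$ and working with $V = P^{-1} U = (v_1,\ldots,v_n)$, which satisfies the coupled transport system \eqref{eq:vn} with $a(x) = \mathbbm{1}_\omega(x)$ now vanishing on $\omega^c = \bigcup_{j=1}^m [r_j,r_{j+1}]$. Because \eqref{ass:eigR} forbids zero eigenvalues, any characteristic entering one of the bounded intervals $[r_j,r_{j+1}]$ must exit it after a finite time, and once a given characteristic $X_i$ has crossed a stripe, the monotonicity in $t$ of its spatial coordinate prevents it from ever re-entering that same stripe. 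Consequently, for each $i$ and each stripe $j$ there is at most one entry time $t_{i,\mathrm{en}}^{(j)}(x,t)$ and one exit time $t_{i,\mathrm{ex}}^{(j)}(x,t)$, with
\begin{equation*}
t_{i,\mathrm{ex}}^{(j)}(x,t) - t_{i,\mathrm{en}}^{(j)}(x,t) \;\le\; \frac{r_{j+1}-r_j}{|\lambda_i|}.
\end{equation*}

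Next I would generalize Proposition \ref{prop:ConstructSol}: tracing the characteristic $X_i(\cdot,x,t)$ backwards in time produces an alternating composition of dissipative and conservative semigroups, one factor of $S_{c,i}$ for each stripe the characteristic crosses before reaching its origin, separated by factors of $S_{d,i}$. Setting
\begin{equation*}
\mathcal{I}(x,t) \;=\; \bigcup_{i\in I(x)} \bigcup_{j=1}^m \bigl[t_{i,\mathrm{en}}^{(j)}(x,t),\,t_{i,\mathrm{ex}}^{(j)}(x,t)\bigr],
\end{equation*}
where $I(x)$ is the set of indices whose characteristics ever meet $\omega^c$ before reaching $(x,t)$ (as in Corollary \ref{CorI}, this is determined by the sign of $\lambda_i$ and the position of $x$), the full dissipative semigroup $S_d$ is active on $[0,t]\setminus\mathcal{I}(x,t)$. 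Arguing as in the toy problems of Section \ref{ssec:toys}, I would partition a window of $x$-values into small subintervals $[a_k,a_{k+1}]$, use the integral representation \eqref{InhoTransp} on each subinterval, compare the $x$-dependent entry/exit times with their values at the endpoints $a_k,a_{k+1}$, and apply the (SK) decay estimate \eqref{eq:decay_SK_h}--\eqref{eq:decay_SK_l} on the complement of $\mathcal{I}$. Letting the mesh size tend to zero and squaring yields
\begin{equation*}
\|U^h(\cdot,t)\|_{L^2(\R)} \le C\,\|U_0\|_{L^2(\R)}\,e^{-\gamma\,(t - \sup_{x\in\R}|\mathcal{I}(x,t)|)},
\end{equation*}
and analogously for the low-frequency part.

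To conclude, it suffices to bound $\sup_{x,t} |\mathcal{I}(x,t)|$ by the constant
\begin{equation*}
\bar\tau \;=\; \max\!\left(\sum_{i=1}^{p}\sum_{j=1}^{m}\frac{r_{j+1}-r_j}{|\lambda_i|},\;\sum_{i=p+1}^{n}\sum_{j=1}^{m}\frac{r_{j+1}-r_j}{|\lambda_i|}\right),
\end{equation*}
which follows from the one-stripe bound applied to each $[r_j,r_{j+1}]$ together with subadditivity of the Lebesgue measure of a union. Once this uniform bound is in place, the two desired estimates for $t\ge\bar\tau$ are immediate.

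The step I expect to be the main obstacle is purely bookkeeping: to make the approximation argument of Section \ref{ssec:scalar} rigorous in the multi-stripe setting one must control the positive and negative parts of $v_i$ on each of the intervals $[t_{i,\mathrm{en}}^{(j)},t_{i,\mathrm{ex}}^{(j)}]$ simultaneously, ensuring that the remainders $R_{j,i,k}$ (analogues of the $R_{j,i}$ of Section \ref{ssec:toy2}) vanish as the mesh is refined. No new analytical idea is needed beyond what was developed for the single-stripe case; however, obtaining a sharp value of $\tau^*$ (the analogue of the small-time conservation time of Theorem \ref{MainThm2}) in this setting appears genuinely delicate, because the possible overlappings and gaps between the $mn$ intervals produce a combinatorial explosion of regimes governed by algebraic relations among the $\lambda_i$'s and the stripe widths $r_{j+1}-r_j$. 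For this reason, the statement of Theorem \ref{ThmStripe} only asserts the large-time asymptotic behavior.
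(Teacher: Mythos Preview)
Your proposal is correct and follows exactly the route the paper indicates: the paper does not give a detailed proof of Theorem~\ref{ThmStripe} but simply states that the single-stripe argument of Theorem~\ref{MainThm} adapts, with one entry/exit pair per stripe and per characteristic, and records only the gross bound $\bar\tau \le \sum_{j=1}^m\sum_{i=1}^n (r_{j+1}-r_j)/|\lambda_i|$. Your write-up supplies precisely this adaptation; note that your value of $\bar\tau$ (the maximum over the two sign families) is in fact sharper than the bound the paper states, and is justified because for any $x$ lying between two stripes the negative-speed characteristics only cross the stripes to the left while the positive-speed ones only cross those to the right, so the resulting mixed sum is always dominated by the larger of the two full sums.
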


In this setting, we are only able to prove the gross bound 
$$\displaystyle\bar{\tau}\leq \sum_{j=1}^m\sum_{i=1}^n\dfrac{r_{j+1}-r_j}{|\lambda_i|}.$$
By adapting the method developed here, one could also consider a domain made of periodically distributed damped and undamped stripes.

\subsection{Study on the half-line.}
With the method developed in the present paper, one can also consider the case when the x-space $\R$ is replaced by the half-line $(-\infty,0]$ and the undamped region is localized near the boundary. More precisely, consider the following linear hyperbolic system:
\begin{align}\label{eq:LSHalfLine}
\begin{cases}
\pt U + A \px U = B(x) U, & (x,t) \in (-\infty,0] \times (0,\infty), \\
U(0,x) = U_0(x), & x \in (-\infty,0],\\ \text{BC at } x=0.
\end{cases}
\end{align}
where $A$ and $B$ satisfy the same condition as before but $\omega$ is replaced by
\begin{align*}
\omega^* := \R \setminus [-R,0] = (-\infty,-R).
\end{align*}
In this case, if one supplement the system \eqref{eq:LSHalfLine} with suitable boundary conditions (BC) at $x=0$ such that the characteristics are reflected on the boundary, it is then ensured that the time spent by the characteristics in the undamped region $\omega^c$ is uniformly bounded and therefore the asymptotic result would follow from similar arguments as in our previous analysis.

Concerning the boundary conditions, we refer to \cite{Higdon, Russel}. In particular, to characterize the admissible boundary conditions, we should ensure that an uniform Kreiss condition is satisfied. We remark that, near the boundary $x=0$, we have $B(x)=0$; thus our system consists simply of uncoupled transport equations for which one can find suitable boundary conditions.

\subsection{Semilinear and quasilinear system}
\label{ssec:nonlinear}

In the semilinear case, if one assume that the time and space dependent eigenvalues conserve their ordering and signs for all $x$ and $t$, then the intuition is that similar results should hold true, as it is the case for the control of linear transport equations (or even some classes of nonlinear conservation laws, where a lower bound on the speed is needed; cf. \cite{MR3945166}). However, here one must be careful as the dependence on $x$ of the speed of each characteristics make the justification of the decay rates more difficult. More precisely, due to this dependence, one cannot reproduce exactly the computations done in the section dedicated to the scalar case $n=1$.
  
In the quasilinear case, the situation is even more delicate as one must first ensures that the solution exists locally in time and does not blow-up until the dynamics are damped. In the case where an analysis similar to that of the proof of Theorem \ref{MainThm2} can be realized in this case, it would lead to necessary conditions on the size of $\tau^*$ to guarantee the existence of the solution. We mention that such condition will depend only on the eigenvalues of the matrix $A$ and the size of the undamped domain $\omega^c$, and that it would not be trivial when dealing with many components, as seen in Section \ref{sec:3comp}.

\subsection{Higher dimensional setting}
\label{ssec:multid}

In the multidimensional setting, the general form of the system reads
\begin{equation}
\partial_tU + \sum_{j=1}^dA^j(U)\frac{\partial U}{\partial x_j}+B(x)V=0. \label{GEQSYM}
\end{equation}
In this context, there is a direct obstruction to the use of our arguments. Indeed, even in the linear case, the flux matrices $A^j$ may not all be diagonalizable in a same basis and therefore one is not able to rewrite the system as coupled transport equations. 

There are possible approaches, related to our method developed here:
\begin{itemize}
    \item Considering special solutions such as normal modes or plane waves (cf. \cite{Higdon}). These solutions propagate in a direction depending on the parameters given by the form of the solution. Under conditions on the direction of propagation depending on the form of the undamped domain, such a property would imply that the solution only stays in the undamped region for a finite time.
    \item More generally, one can consider the cone of propagation associated to hyperbolic system as defined in \cite{SerreGavage} for instance. Then, restricting the support of the initial data to a bounded domain and looking at the intersection of the cone of propagation and the undamped region, one could obtain a rough approximation of  the delay in the time-decay estimates.
\end{itemize}

\vspace{5mm}

\section*{Acknowledgments}
This work has received funding from the European Research Council (ERC) under the European
Union’s Horizon 2020 research and innovation programme (grant agreement NO: 694126-DyCon),
the Air Force Office of Scientific Research (AFOSR) under Award NO: FA9550-18-1-0242, the Grant
MTM2017-92996-C2-1-R COSNET of MINECO (Spain), the Alexander von Humboldt-Professorship
program, the European Unions Horizon 2020 research and innovation programme under the Marie Sklodowska-Curie grant agreement No.765579-ConFlex, and the Transregio 154 Project ``Mathematical Modelling, Simulation and Optimization Using the Example of Gas Networks'' of the Deutsche
Forschungsgemeinschaft.

Nicola De Nitti is a member of the Gruppo Nazionale per l’Analisi Matematica, la Probabilit\'a e le loro Applicazioni (GNAMPA) of the Istituto Nazionale di Alta Matematica (INdAM).

\vspace{5mm}
\bibliographystyle{abbrv}
\bibliography{PartiallyDissipativeLoc-v1-ref.bib}

\vfill

\end{document}